\def\u{{\text {\textbf u}}}
\def\v{{\text {\textbf v}}}
\def\e{{\text {\textbf e}}}
\def\1{{\text {\textbf 1}}}
\def\f{{\text {\textbf f}}}
\def\d{{\text {\textbf d}}}
\newtheorem{theorem}{Theorem}[section]
\newtheorem{lemma}[theorem]{Lemma}
\newtheorem{remark}[theorem]{Remark}
\newtheorem{proof*}[theorem]{Proof}
\newtheorem{algorithm}[theorem]{Algorithm}
\begin{document}

\title[ASMG Method Based on ASCA for Graph Laplacian]{Auxiliary Space Multigrid Method Based on Additive
Schur Complement Approximation for Graph Laplacian}
\author[Maria Lymbery]{Maria Lymbery}
\address{Faculty of Mathematics,
University of Duisburg-Essen,
Thea-Leymann-Str.~9, 45127 Essen, Germany}
\email{maria.lymbery@uni-due.de}

\maketitle


\begin{abstract}
This research studies the application of the auxiliary space multigrid method (ASMG) that is based on additive Schur complement 
approximation (ASCA) to graph Laplacian matrices arising from general graphs. A major predicament when considering algebraic multigrid (AMG) methods 
on such graphs is the choice of a general coarsening strategy which has to be both cheap and effective. Such a strategy has been incorporated in the presented 
approach which in addition has several advantages. First, it is purely algebraic in its construction which makes the algorithm easy to implement. Furthermore, the approach requires 
no limitation on the graph's structure and itself can be adjusted to the particular problem. Last but not least, its computational complexity can be easily analysed. A demonstrative set of 
numerical experiments is presented.
\end{abstract}

\section{Introduction}

Laplacian matrices of graphs have a wide range of applications including, but not limited to, machine learning, clustering in images, data mining, 
see e.g.~\cite{B-06,J-03, L-07, Q-12, Z-03}. 
Additionally, they also play an important role in finite element and finite difference discretizations of elliptic partial differential equations (PDEs) describing various 
physical phenomena. A structured and 
detailed overview about their significance could be found, for example, in the work of Spielman~\cite{S-10}. 

As would be expected, the design and development of fast solvers for corresponding linear systems has been the focus of considerable research 
where multilevel/multigrid algorithms have been of particular interest.
Notable results include aggregation-based, see e.g.~\cite{L-12,B-13,B-14,N-16}, and disaggregation-based AMG preconditioners, see e.g.~\cite{D-15}, 
accelerated,~\cite{DA-15}, and  combinatorial multigrid and multilevel preconditioners,
see~\cite{K-09, K-07}. 
In~\cite{L-12} Livne and Brandt have introduced the lean algebraic multigrid method based on aggregation motivated by a new vertex proximity measure 
and simple piecewise constant prolongators coupled with an energy correction procedure applied to coarse-level systems. 
Brannick et al. have presented 
estimates of the convergence rate and complexity of an AMG preconditioner based on piecewise constant coarse vector spaces applied to the graph Laplacian, 
see~\cite{B-13}. 
The approach proposed in~\cite{B-14} combines aggressive coarsening based on aggregation 
with a polynomial smoother with sufficiently large degree to solve Laplacian systems arising from the standard linear
finite element discretization of the scalar Poisson problem. 
More recently, Napov and Notay, see~\cite{N-16}, have developed an aggregation based multigrid method that relies on the recursive static elimination
of the vertices of degree $1$ combined with a new Degree-aware Rooted Aggregation (DRA) algorithm. 
The adaptive algebraic multigrid method proposed by D'Ambra and Vassilevski for solving Graph Laplacians, see~\cite{D-15}, relies on a disaggregation technique where the few high degree nodes 
are broken into multiple smaller degree nodes. 
The authors of~\cite{DA-15} offer multigrid type techniques that combine ad hoc coarser-grid operators with iterative techniques used as smoothers 
for the numerical solution of graph Laplacian operators. 
The results presented in~\cite{K-09, K-07} suggest an approach to construct multigrid-like solvers based on support theory principles. 
Graph Laplacian preconditioners have been studied in~\cite{S-10} and~\cite{V-12} where graph sparsification techniques have been used to maintain reasonable computational 
complexity in the case of general large graphs. 

The solver advocated here results from the interplay between graph and multigrid theory which makes it universal from the view point of applicability and 
construction. The utilized multigrid method is based on the additive Schur complement approximation (ASCA), see~\cite{K-12}, used to construct 
coarse spaces and auxiliary-space correction, see~\cite{K-15,H-07,X-96}, which replaces the standard multigrid coarse-grid correction. Finding maximal independent 
subsets (MIS) in graphs is essential for the proposed auxiliary space multigrid algorithm as they specify not only the coarse-fine splitting of the degrees of freedom, 
as e.g. in~\cite{A-10,M-01}, but also the construction of ASCA. 

The rest of the paper is organized as follows. In Section~\ref{pr} the graph Laplacian model problem is presented, the basic notations are introduced and a procedure generating the 
building components of the multigrid method is described. The next section, Section~\ref{ASMG}, includes the definitions of the ASCA and the auxiliary space 
multigrid algorithm. 
The complexity of the proposed algorithm has been discussed in Section~\ref{sec:complexity}.
A demonstrative set of experiments is presented in Section~\ref{ne}. 
Finally, some concluding remarks have been made.

\section{Problem formulation}\label{pr}
\subsection{Description and assumptions}
 Consider the undirected graph $K$ comprising 
 the set of vertices, also known as nodes, $\mathcal{V}$ together with the set of edges $\mathcal{E}$ (which are $2$-element subsets of $\mathcal{V}$)
$$
K = (\mathcal{V}, \mathcal{E}), 
$$
where
$\vert \mathcal{V}\vert=n$ and $\vert \mathcal{E}\vert=m$. 
In what follows, we further assume that $K$ is an unweighted and connected graph. 
Our aim is to solve the linear system 
\begin{equation}\label{eq:pr_form}
A\u=\f
\end{equation}
where $\f \in\text{Range}(A)$ while $A$ is the Laplacian matrix related to the graph $K$ as
$$
(A)_{ij}=\begin{cases}
\;\, d_i \qquad\qquad  i=j;\\[2.2pt]
 -1 \qquad\qquad  i\neq j, \, (i,j)\in\mathcal{E}; \\[2.2pt]
\;\;\;0 \qquad\qquad i\neq j, \, (i,j)\notin\mathcal{E}.
\end{cases}
$$
Here $d_i$ denotes the degree of the $i$-th vertex.

Obviously, $A$ is a symmetric, positive semi-definite (SPSD), singular M-matrix whose kernel is spanned by the constant vector $\1$.
One way to deal with the semi-definiteness of the problem is to apply a rank-$1$ update of the matrix, thereby 
obtaining an equivalent SPD problem, see~\cite{D-15} for more details. Another remedy is the application of 
a deflated version of the conjugate gradient (CG) method to maintain the residuals orthogonal to the kernel when 
solving iteratively the linear system, see~\cite{SYEG-06}.

\begin{remark}
Problem~\eqref{eq:pr_form} can be equivalently presented in the variational form
$$
(A\u,\v) = (\f,\v), \qquad \forall \v \in \mathbb{R}^n, 
$$
where
$$
(A\u,\v)=\sum_{e=(i,j)\in \mathcal{E}}(u_i-u_j)(v_i-v_j), \qquad (\f,\v)=\sum_{i\in\mathcal{V}}f_i v_i \qquad \text{and}\;\; (\f,\1)=0.
$$
\end{remark}

\subsection{Preliminaries and notation}\label{sec:prel_not}
Consider subgraphs $K_G=(\mathcal{V}_G,\mathcal{E}_G)$ and $K_F=(\mathcal{V}_F,\mathcal{E}_F)$ of $K$ such that
\begin{subequations}
\begin{alignat}{4}
\forall e\in \mathcal{E}&\;\; \text{there exists}\;\; K_F\in \mathcal{F}=\{K_F\}: e\in \mathcal{E}_F \\
\forall K_F\in \mathcal{F}&\;\; \text{there exists}\;\; K_G\in \mathcal{G}=\{K_G\}: K_F\subset K_G
\end{alignat}\label{eq:defs}
\end{subequations}

Similarly, as for the notation introduced in~\cite{K-15} we refer to the subgraphs $K_F$ as \textit{structure subgraphs}, 
and to $K_G$ as \textit{macrostructure graphs}. The Laplacian matrix $A$ then can be assembled from the 
local matrices $A_F$ and $A_G$, i.e.,

$$
A = \sum_{K_F \in \mathcal{F}} R_F^T A_F R_F
$$
and
$$
A = \sum_{K_G \in \mathcal{G}} R_G^T A_G R_G ,
$$
where $R_F^T$ and $R_G^T$ are the standard inclusion operators.

\begin{remark}
It is important to note that when the covering of the graph $K$ by structure subgraphs $\mathcal{F}$ is such that no two structure subgraphs share an edge, 
then the matrices $A_F$ are the standard Laplacian matrices associated with the subgraphs $K_F$. 
If in addition there are no two macrostructures sharing a structure subgraph, the same is applicable to the matrices $A_G$ associated with the subgraphs 
$K_G$.   
\end{remark}

The macrostructure matrices $A_G$ themselves can be assembled from the structure matrices $A_F$ as given by
\begin{equation}\label{s_ms}
A_G = \sum_{K_F \subset K_G} \sigma_{F,G} R_{K_G \mapsto K_F}^T A_F R_{K_G \mapsto K_F}.
\end{equation}
Here, the scaling factors $\sigma_{F,G}$ provide a partition of unity:
$$
\sum_{K_G \supset K_F} \sigma_{F,G} = 1 \quad \forall K_F \in \mathcal{F}.
$$

Our next aim is to present a procedure for generating structure and macrostructure matrices which are the building blocks of the auxiliary space multigrid method: 

\begin{itemize}
\item[Step I:] 
Starting with a given graph we find a maximal independent set of nodes. For convenience let us denote it by 
$\mathcal{V}^1$. Each of the nodes in $\mathcal{V}^1$ then 
serves the purposes of a "focus" of a structure subgraph and would identify and define this subgraph. 

Figure~\ref{Fig1} illustrates one particular graph with nodes represented by circles and edges by lines. On 
the left subfigure all nodes are in black whereas on the right only the nodes belonging to a maximal independent 
set remain in black. 

\begin{figure}[ht]
\begin{tabular}{@{}c@{\hspace{.1cm}}c@{}}
\includegraphics[width=8.5cm]{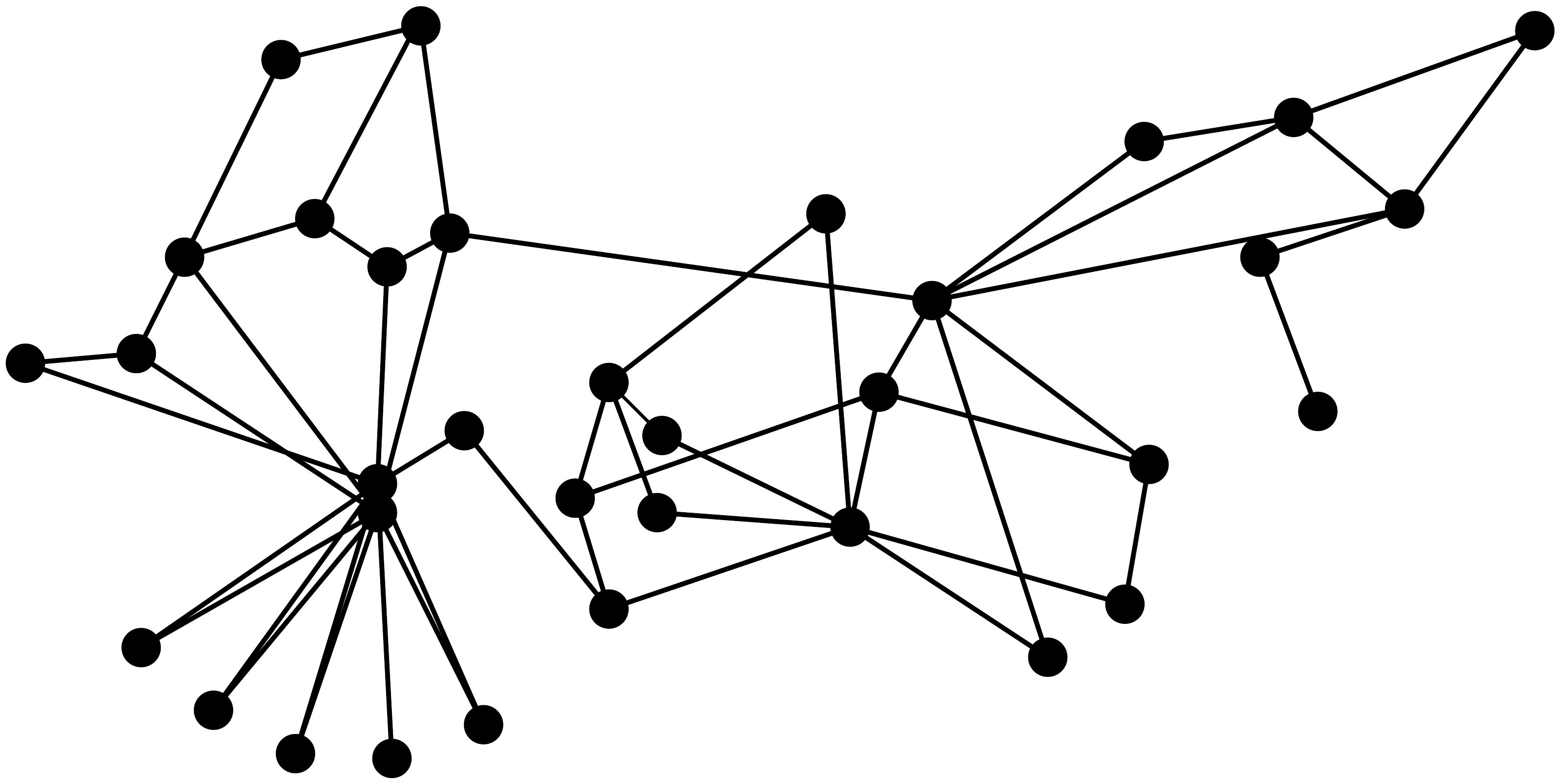} & \includegraphics[width=8.5cm]{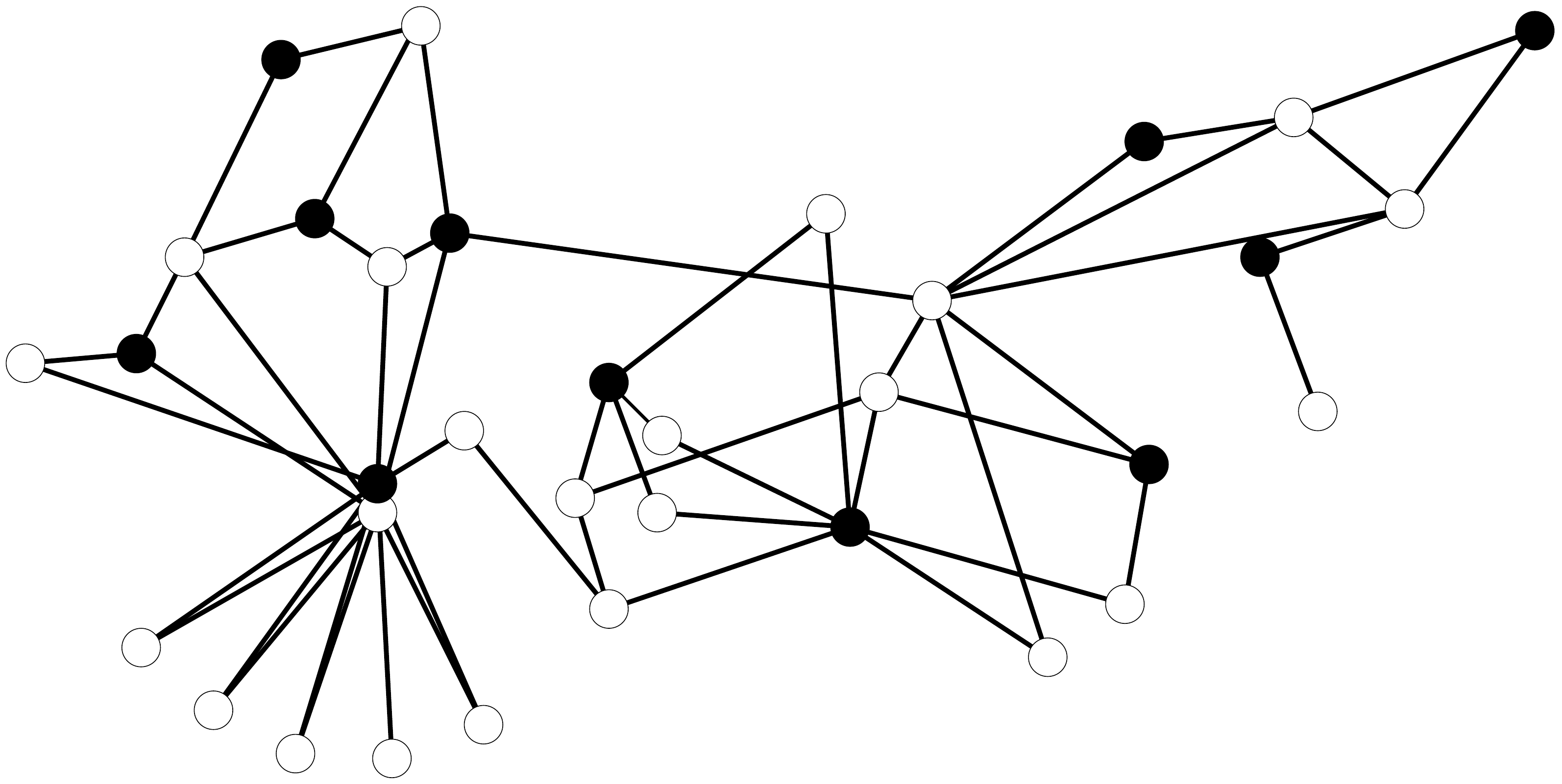}
\end{tabular}
\caption{}
\label{Fig1}
\end{figure}

\item[Step II:] The set of nodes for a structure subgraph consists of a focus node and all nodes that are at a graph distance $1$ or $2$ to it, which might 
include, of course, also other foci nodes. The set of edges is then formed as all edges from the original graph that connect the nodes belonging to the 
structure subgraph.  

Figures~\ref{Fig2}--\ref{Fig7} show the structure subgraphs for our example. We have found a maximal independent set consisting 
of $11$ nodes and therefore we have formed $11$ subgraphs.

\begin{figure}[h!]
\begin{tabular}{@{}c@{\hspace{.1cm}}c@{}}
\includegraphics[width=8.5cm]{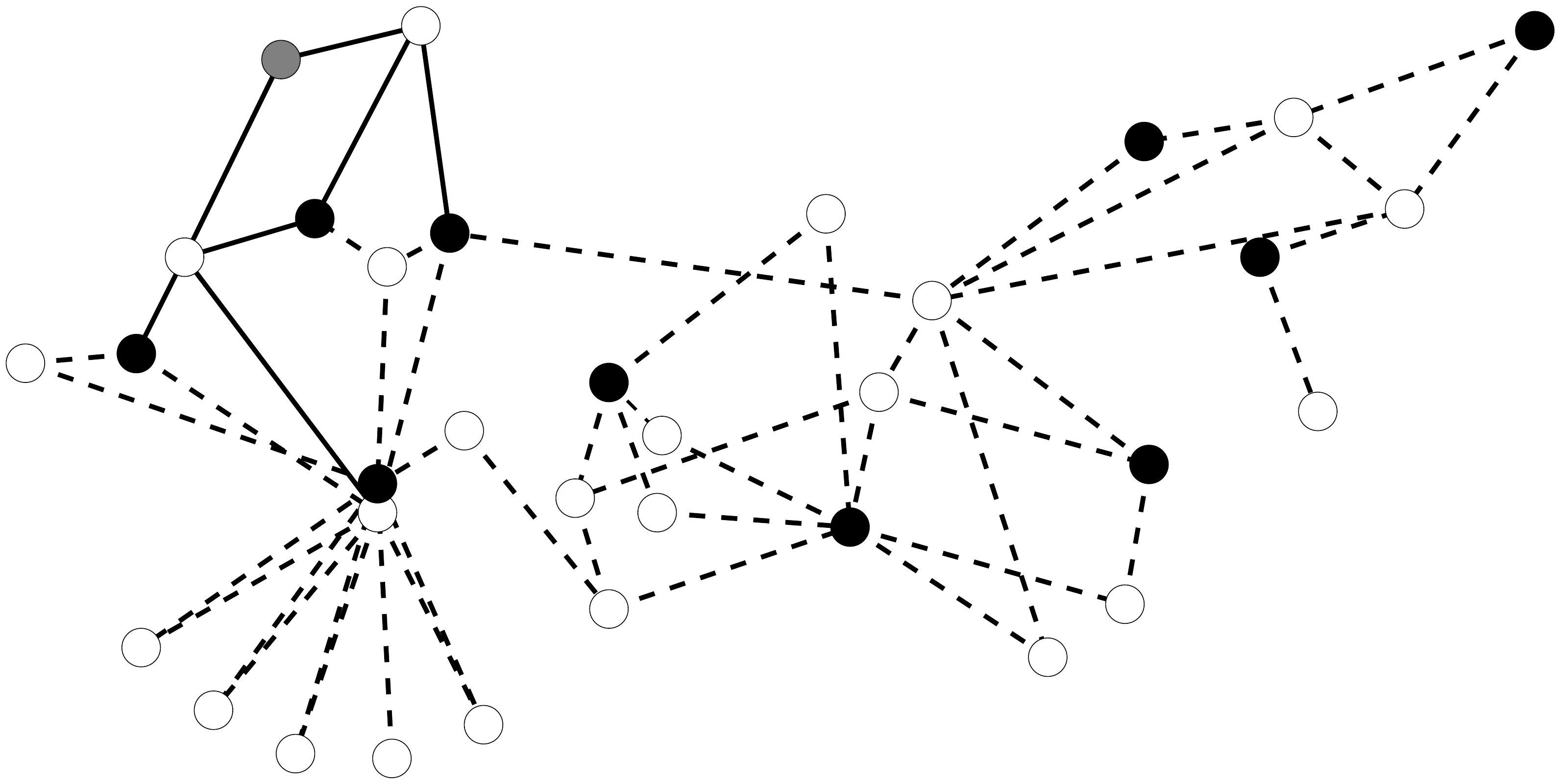}
&
\includegraphics[width=8.5cm]{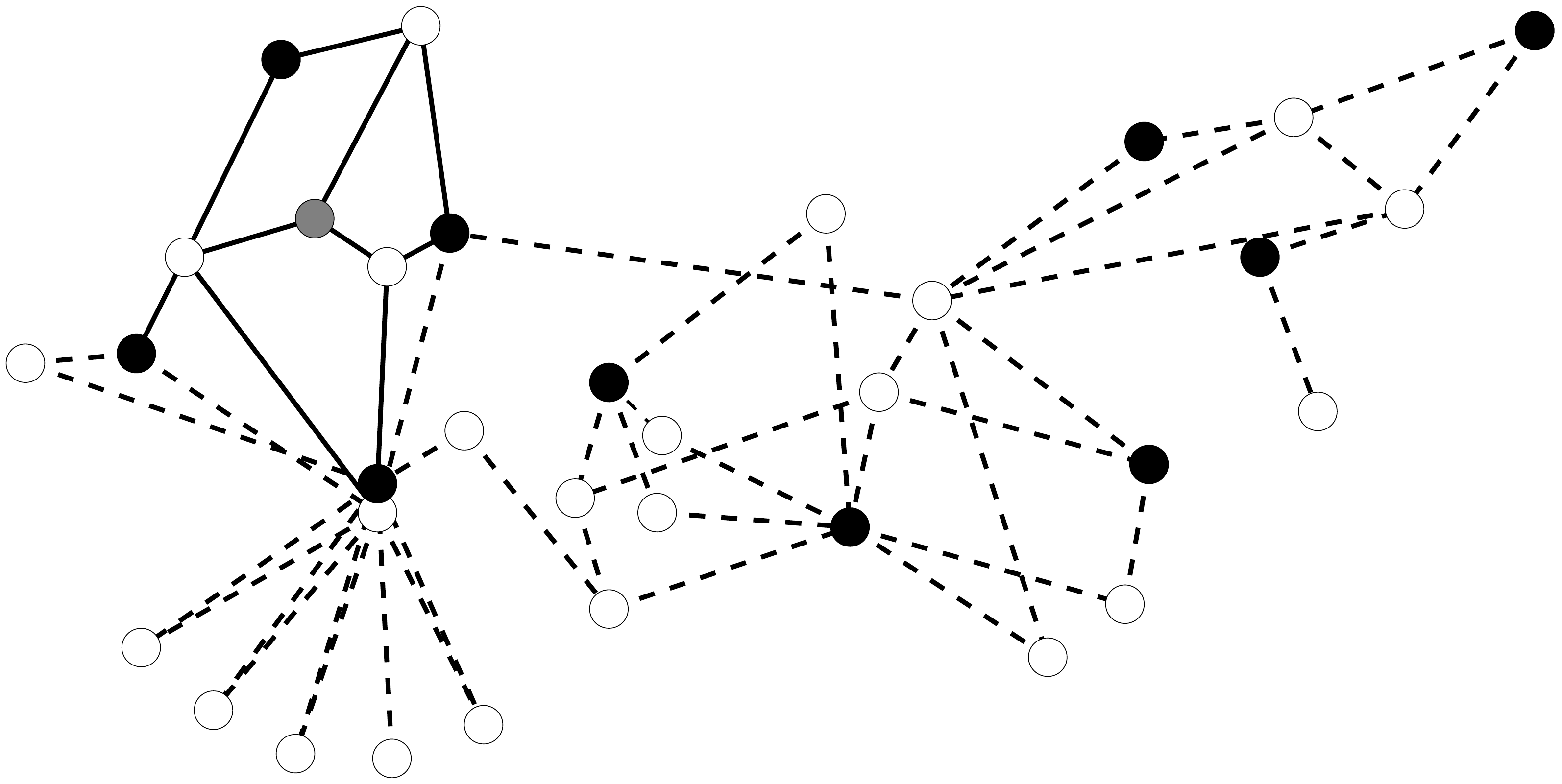}
\end{tabular}
\caption{}
\label{Fig2}
\end{figure}

\begin{figure}[h!]
\begin{tabular}{@{}c@{\hspace{.1cm}}c@{}}
\includegraphics[width=8.5cm]{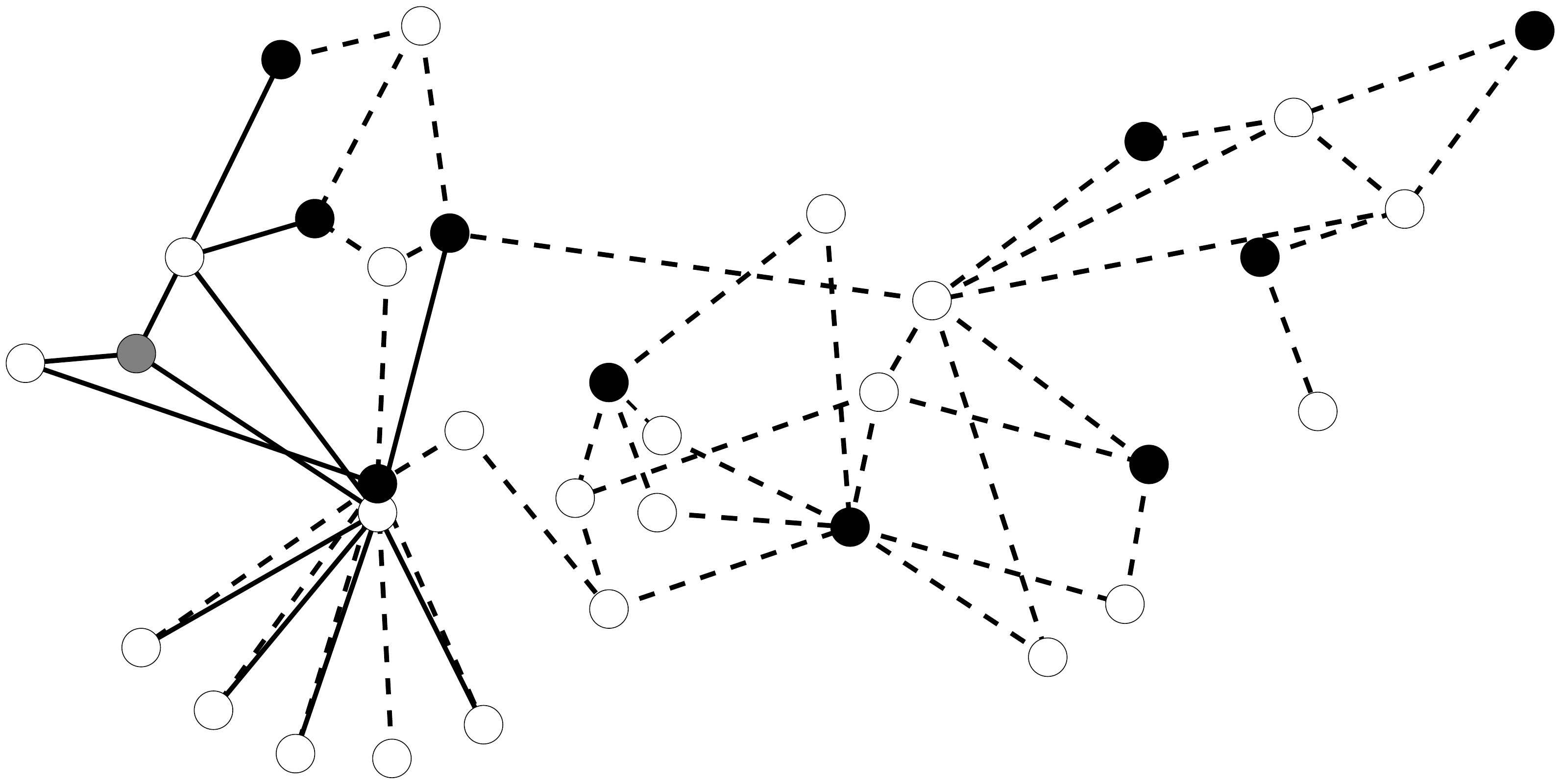}
&
\includegraphics[width=8.5cm]{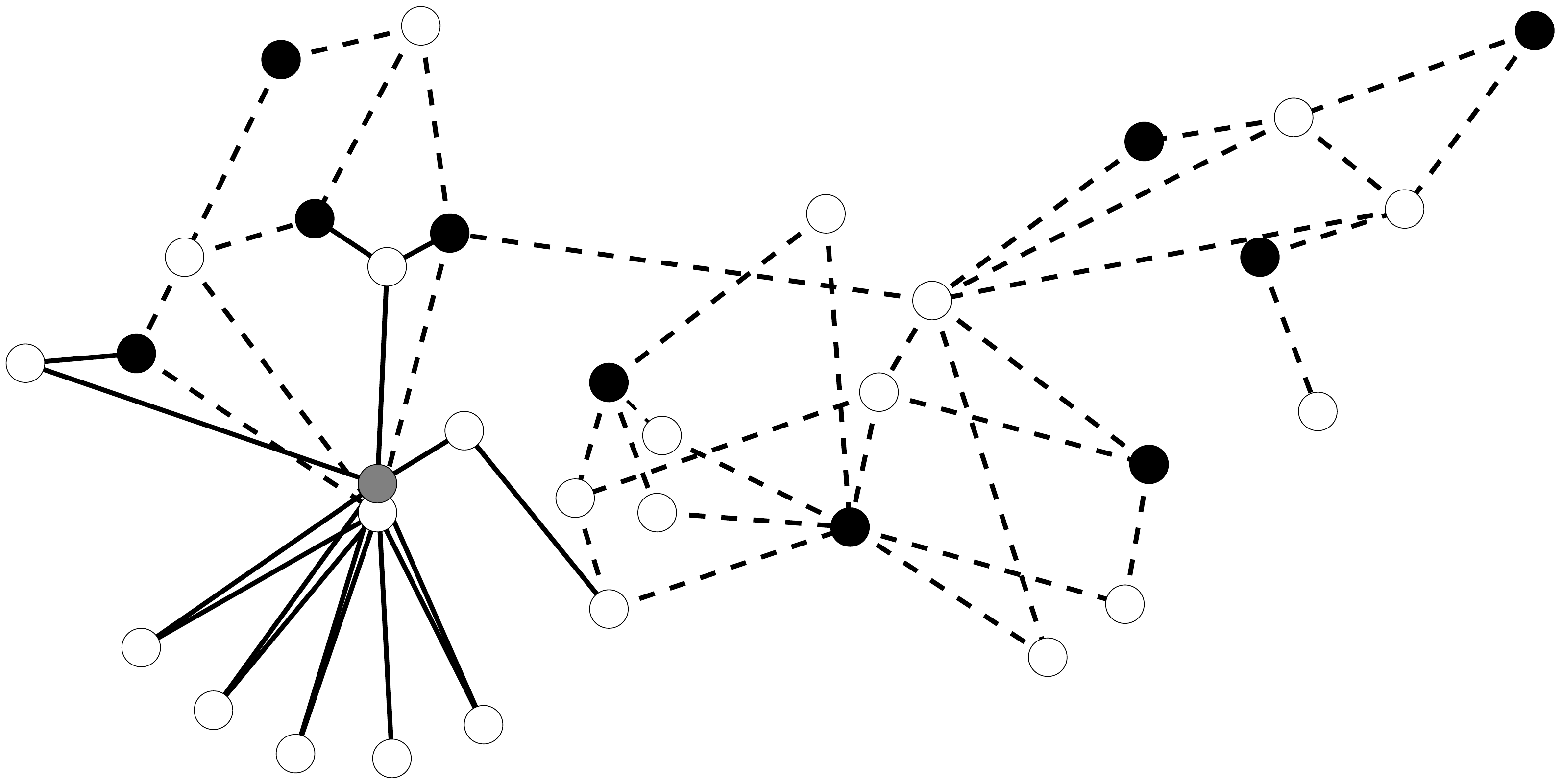}
\end{tabular}
\caption{}
\label{Fig3}
\end{figure}

\begin{figure}[h!]
\begin{tabular}{@{}c@{\hspace{.1cm}}c@{}}
\includegraphics[width=8.5cm]{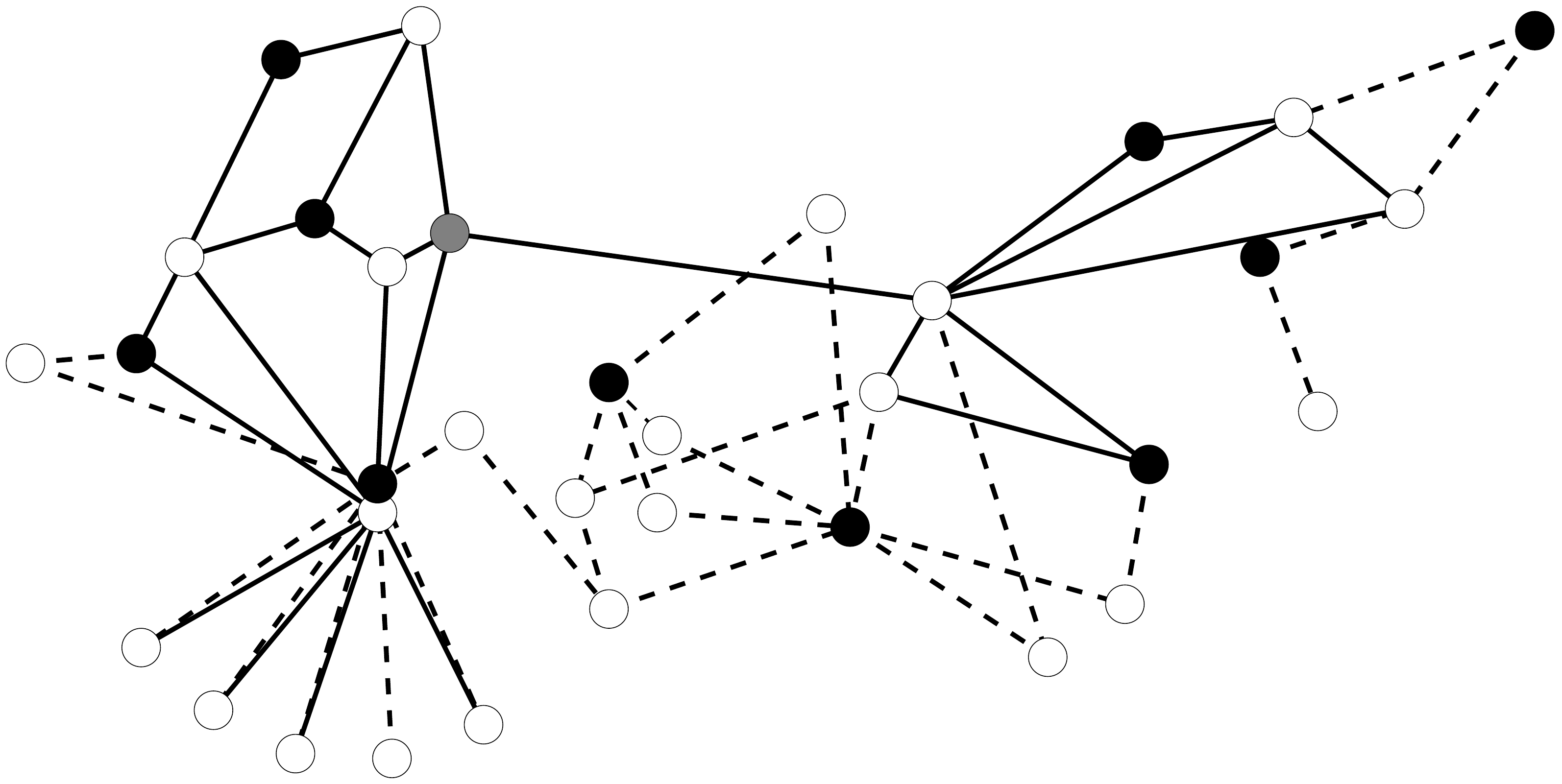}
&
\includegraphics[width=8.5cm]{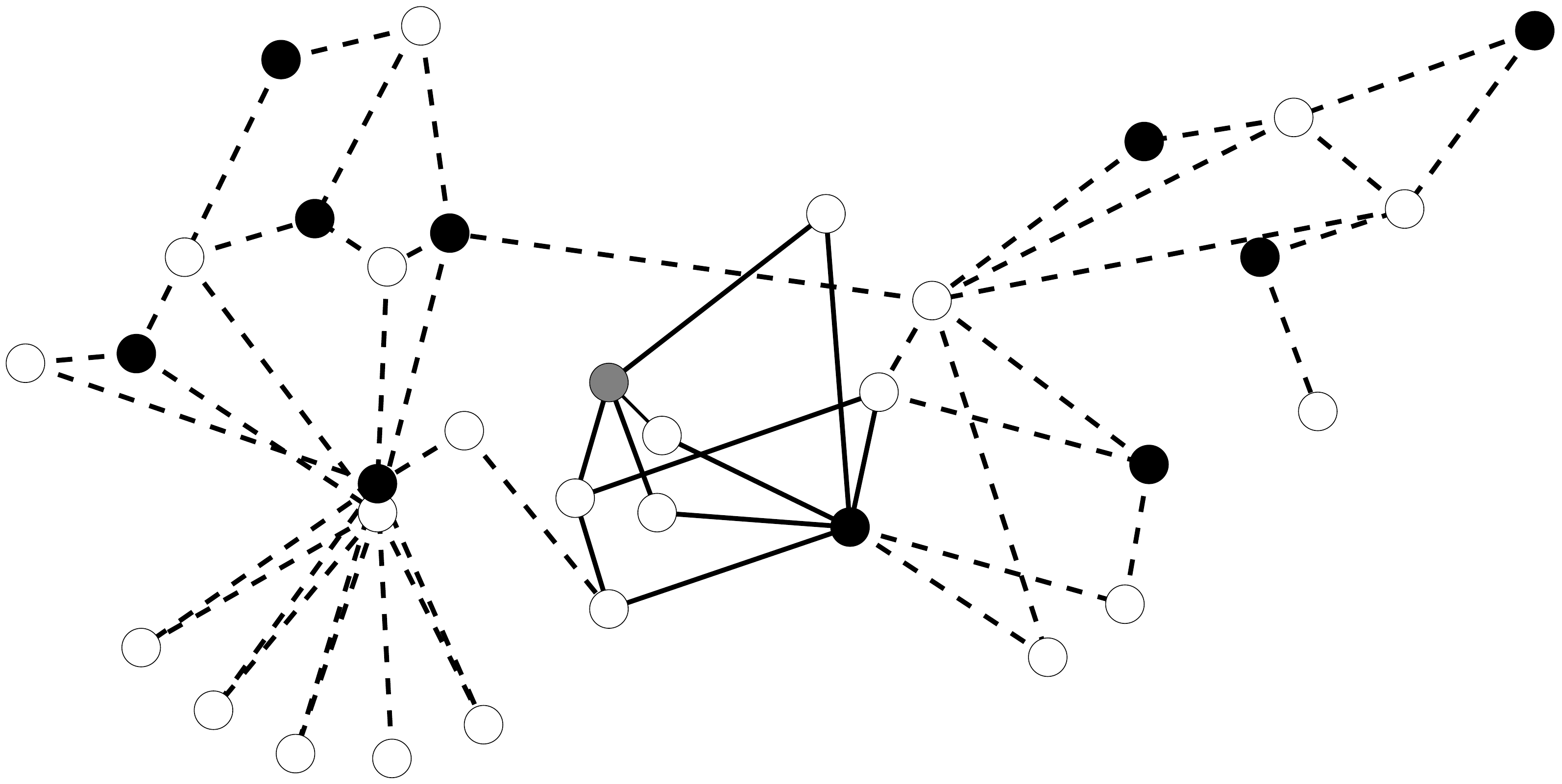}
\end{tabular}
\caption{}
\label{Fig4}
\end{figure}

\begin{figure}[h!]
\begin{tabular}{@{}c@{\hspace{.1cm}}c@{}}
\includegraphics[width=8.5cm]{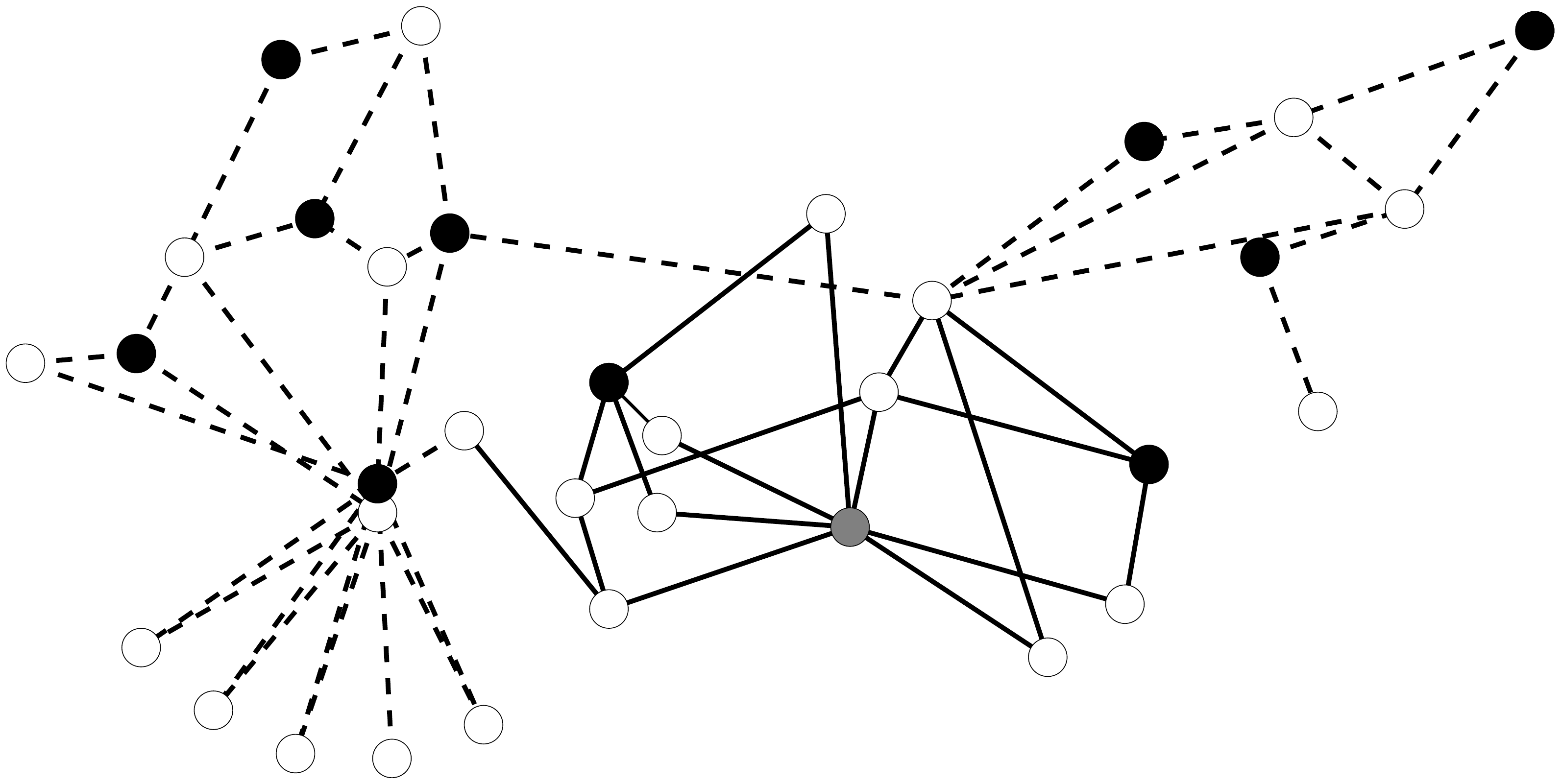}
&
\includegraphics[width=8.5cm]{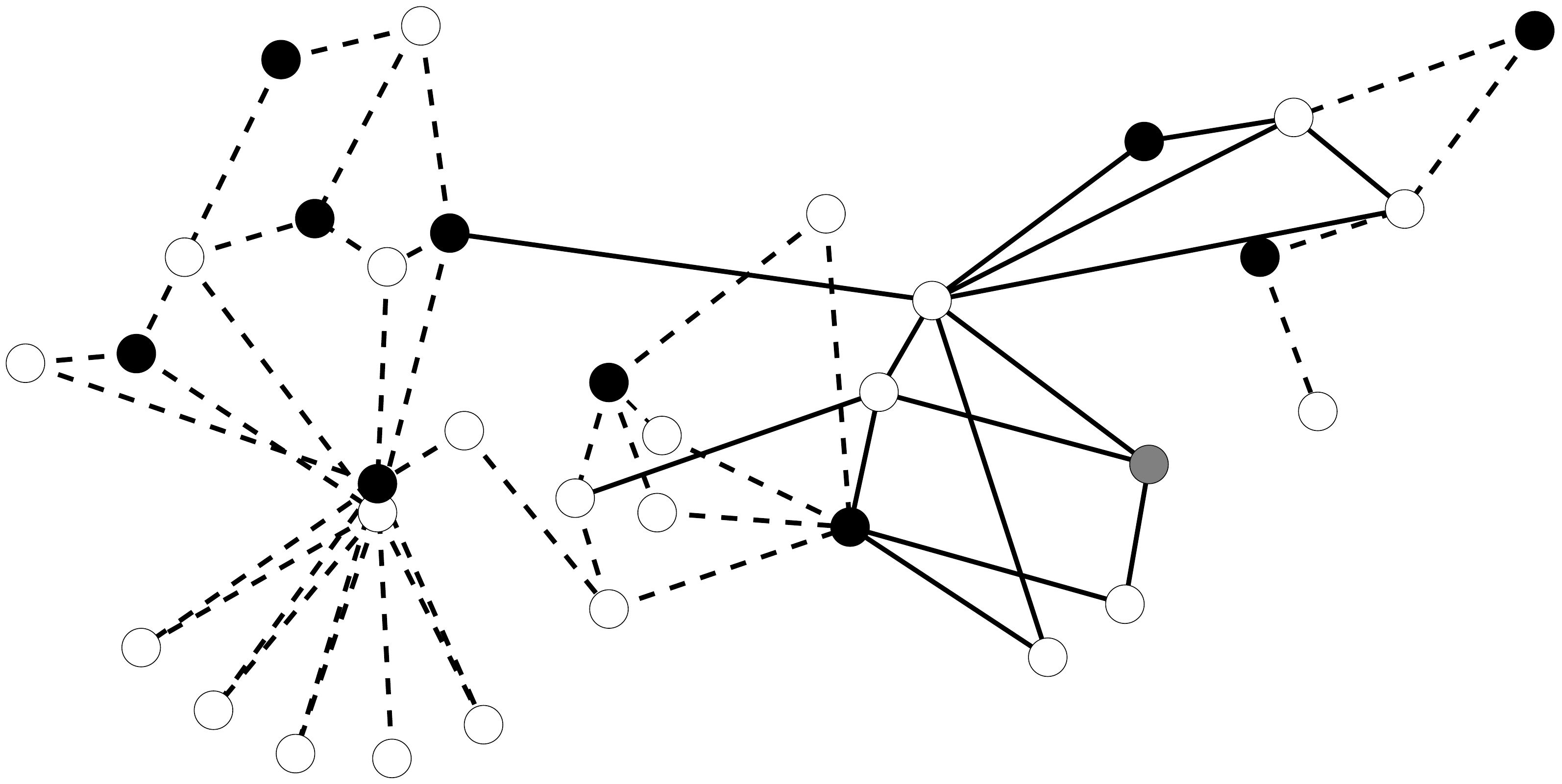}
\end{tabular}
\caption{}
\label{Fig5}
\end{figure}

\begin{figure}[h!]
\begin{tabular}{@{}c@{\hspace{.1cm}}c@{}}
\includegraphics[width=8.5cm]{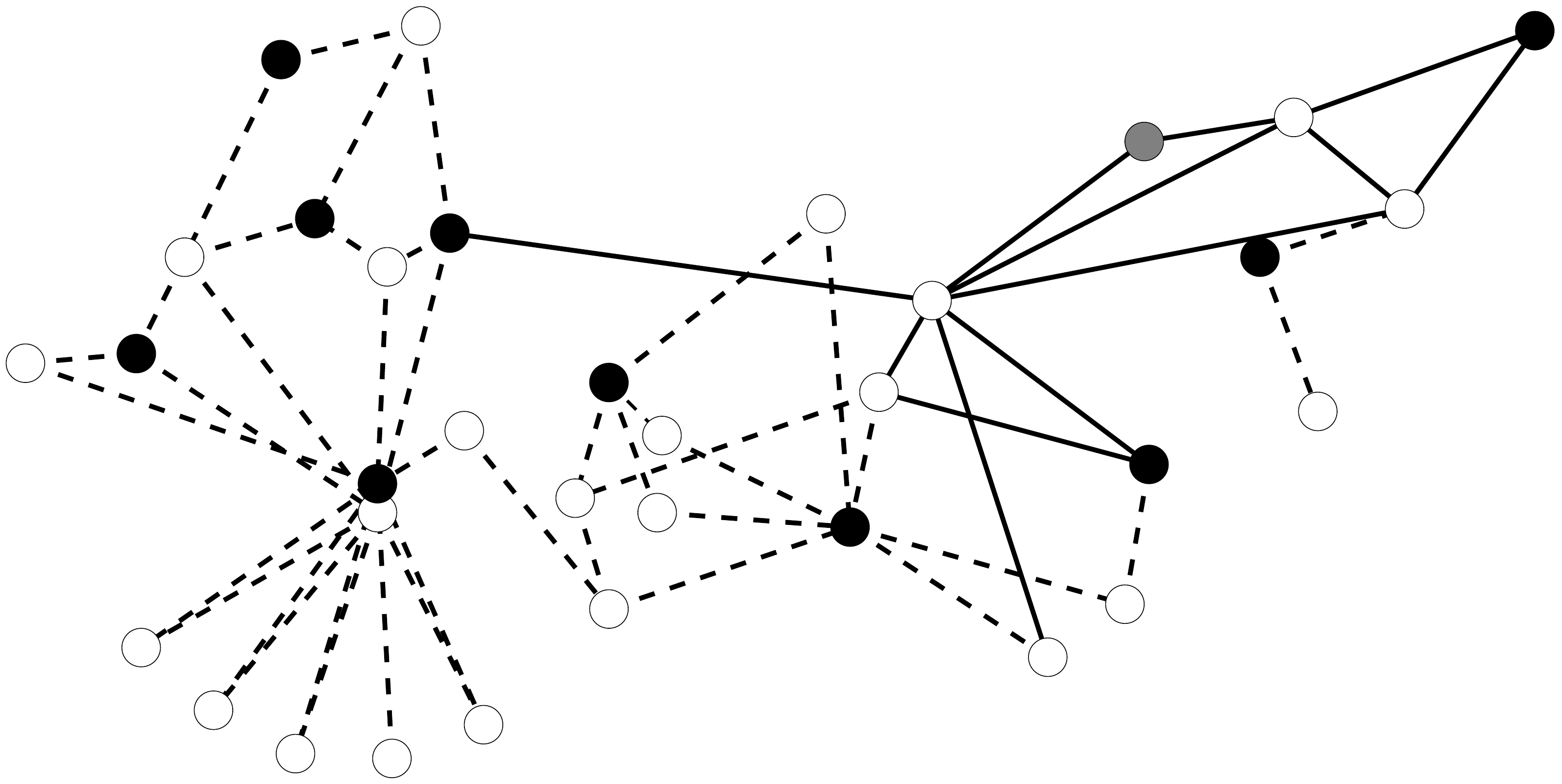}
&
\includegraphics[width=8.5cm]{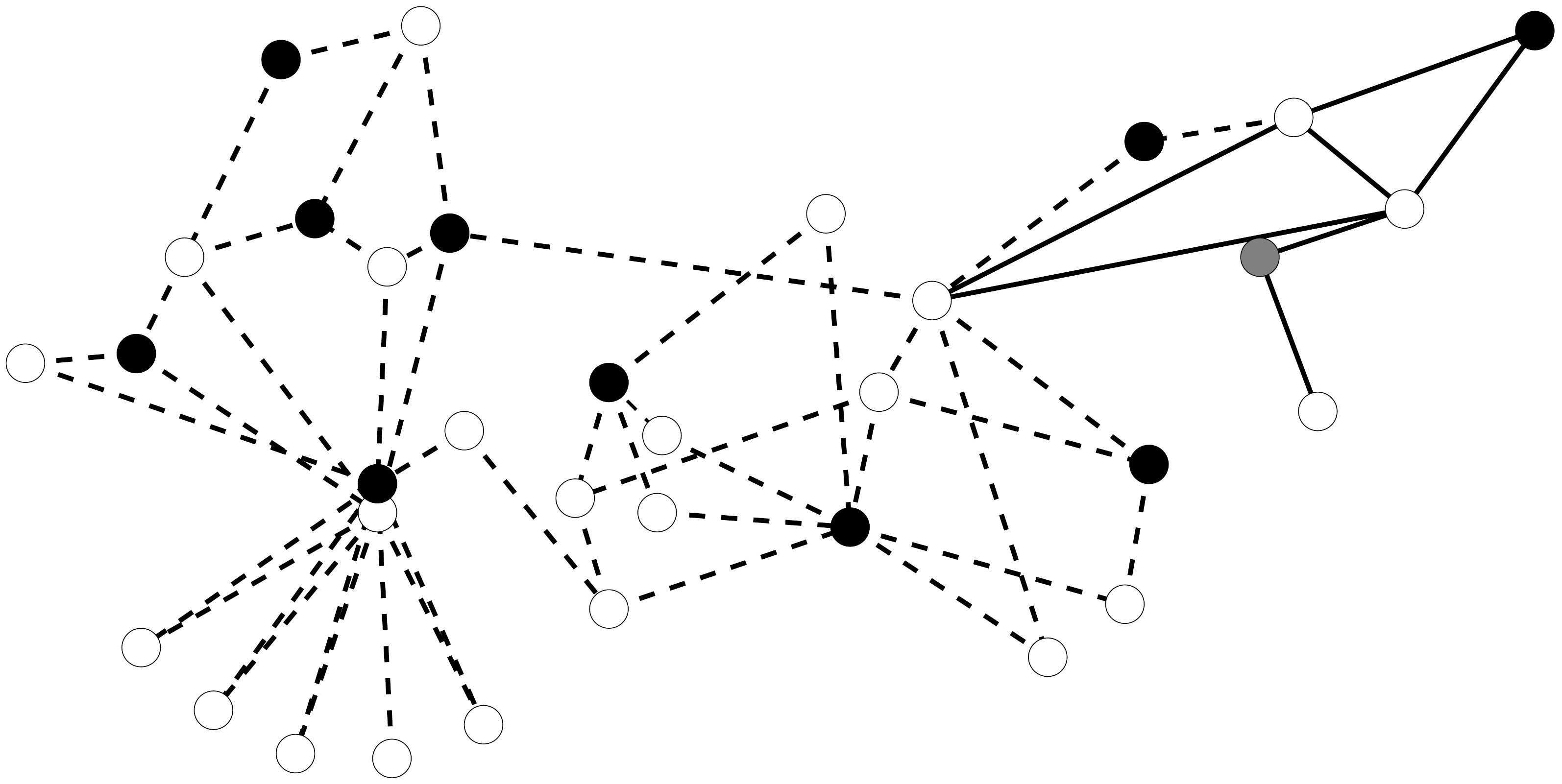}
\end{tabular}
\caption{}
\label{Fig6}
\end{figure}

\begin{figure}[h!]
\centering
\includegraphics[width=8.5cm]{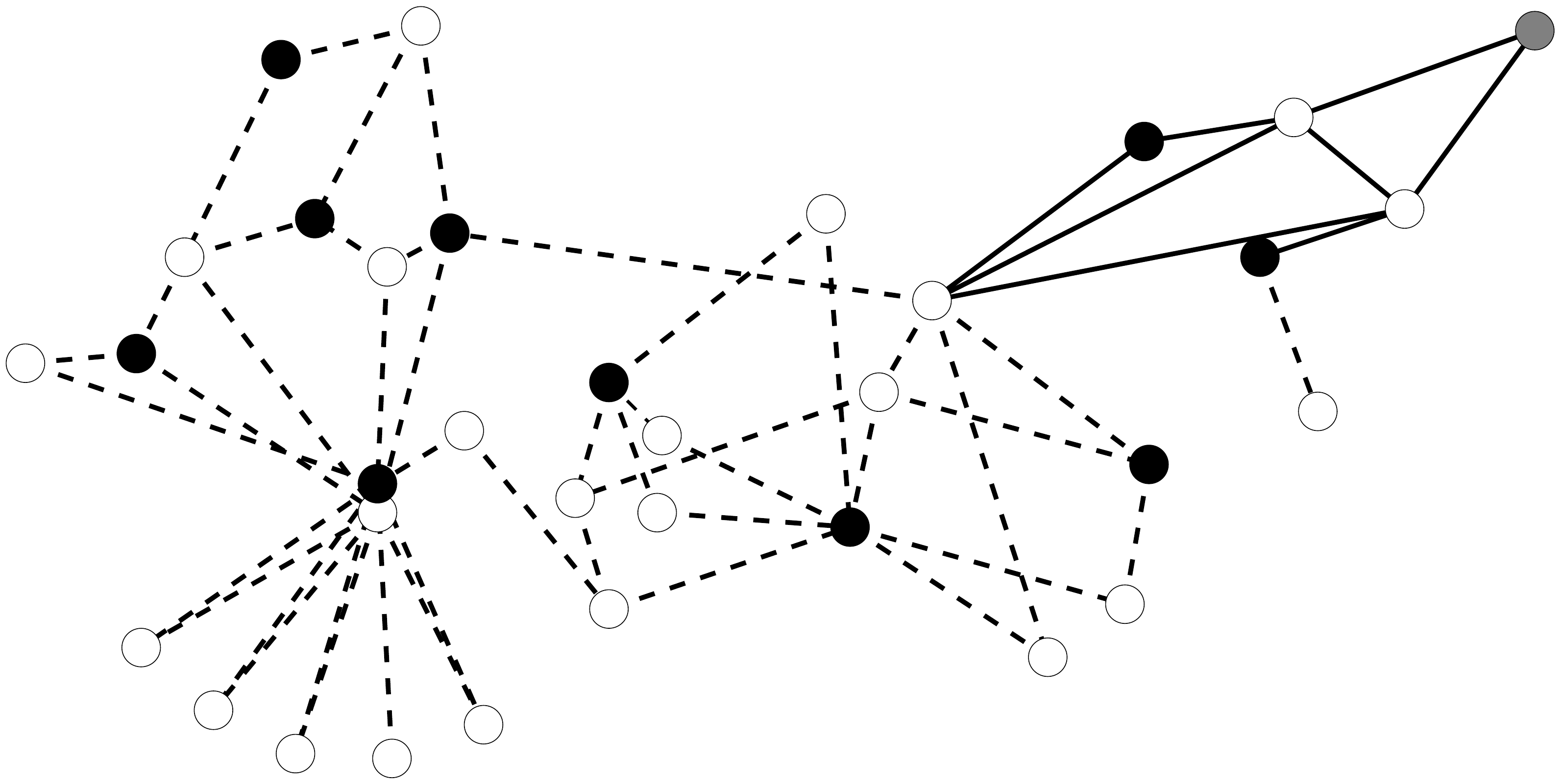}
\caption{}
\label{Fig7}
\end{figure}

\item[Step III:] We construct a "coarser subgraph" $K^{1}$ from the original graph. Its nodes are the nodes from $\mathcal{V}^1$ 
and two nodes are connected by an edge in $K^{1}$ if in the original graph they are at a graph distance $2$. Note that by 
definition this is the smallest graph distance at which any nodes from $\mathcal{V}^1$ could be in $K$.

\item[Step IV:] We find a maximal independent set of nodes in the "coarser subgraph" and denote it by 
$\mathcal{V}^{2}$. Each of the nodes in $\mathcal{V}^{2}$ will play the role of a "focus" of a macrostructure subgraph and 
would specify this subgraph. 

Step III and Step IV of the procedure are represented on Figure~\ref{Fig8} in a similar way as in Figure~\ref{Fig1}. This time the maximal independent 
subset consists of the $5$ nodes depicted in black on the right subfigure.

\begin{figure}[h!]
\begin{tabular}{@{}c@{\hspace{.1cm}}c@{}}
\includegraphics[width=8.5cm]{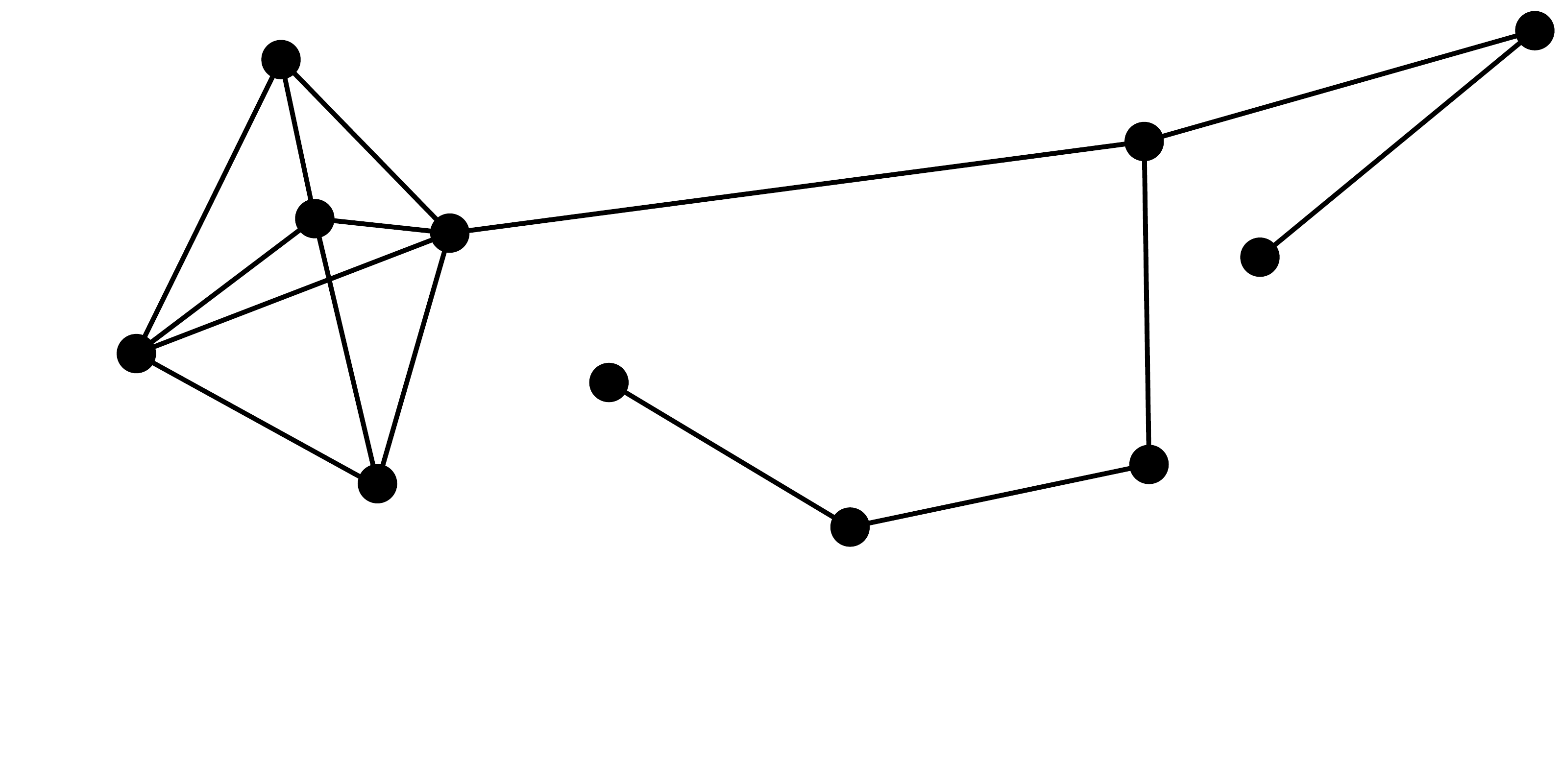}
&
\includegraphics[width=8.5cm]{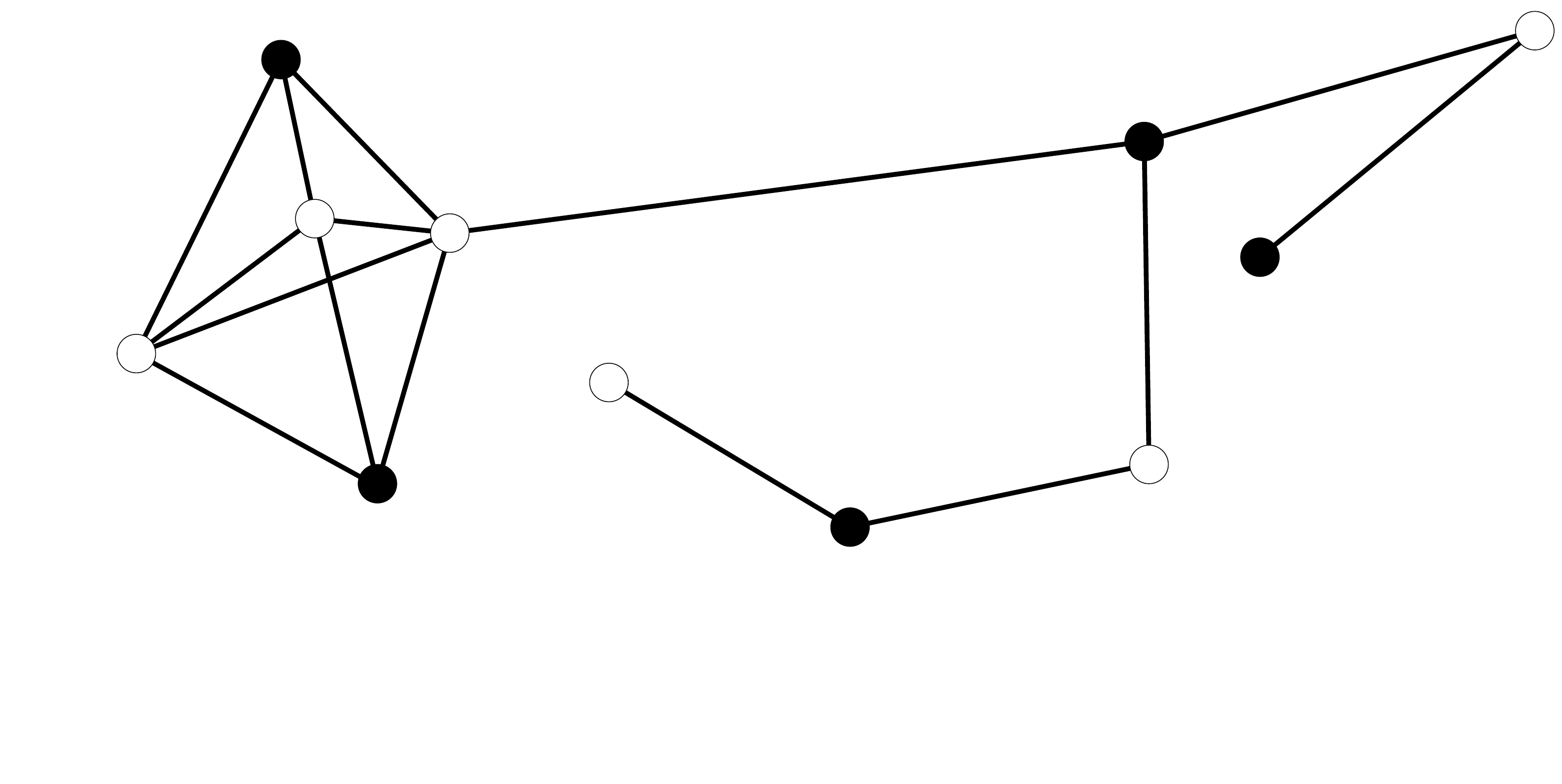}
\end{tabular}
\caption{}
\label{Fig8}
\end{figure}

\item[Step V:] We construct a macrostructure subgraph for each node from $\mathcal{V}^{2}$ by assembling all structure subgraphs 
whose focal nodes are at a graph distance smaller or equal to $1$ in $K^{1}$. 

This step is illustrated in Fig.~\ref{Fig9}--Fig.~\ref{Fig13}. On the top subfigures the macrostructure subgraphs are depicted with solid lines within 
the original graph, on the lower left subfigure for clarity they are displayed by themselves. 
The lower right subfigures show only these nodes from the macrostructure subgraphs that belong to the MIS and two nodes have been connected by an edge 
under the condition that there has been a path between them in the macrostructure subgraph consisting only of nodes that are not in $\mathcal{V}^1$. 

\begin{figure}[h!]
\begin{tabular}{c}
\includegraphics[width=8.5cm]{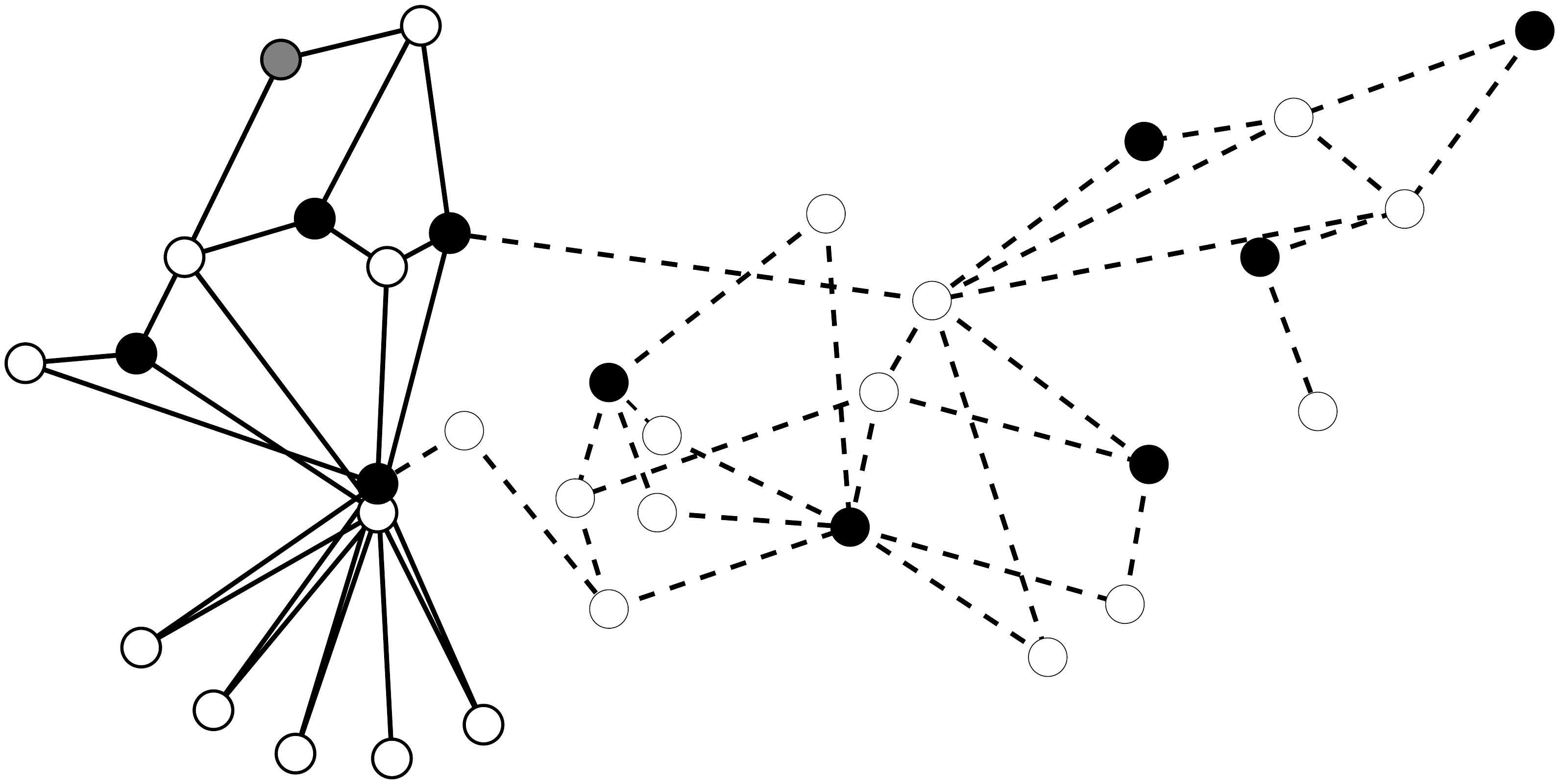}
\\[-0.2ex]
\begin{tabular}{@{}c@{\hspace{.1cm}}c@{}}
\includegraphics[width=8.5cm]{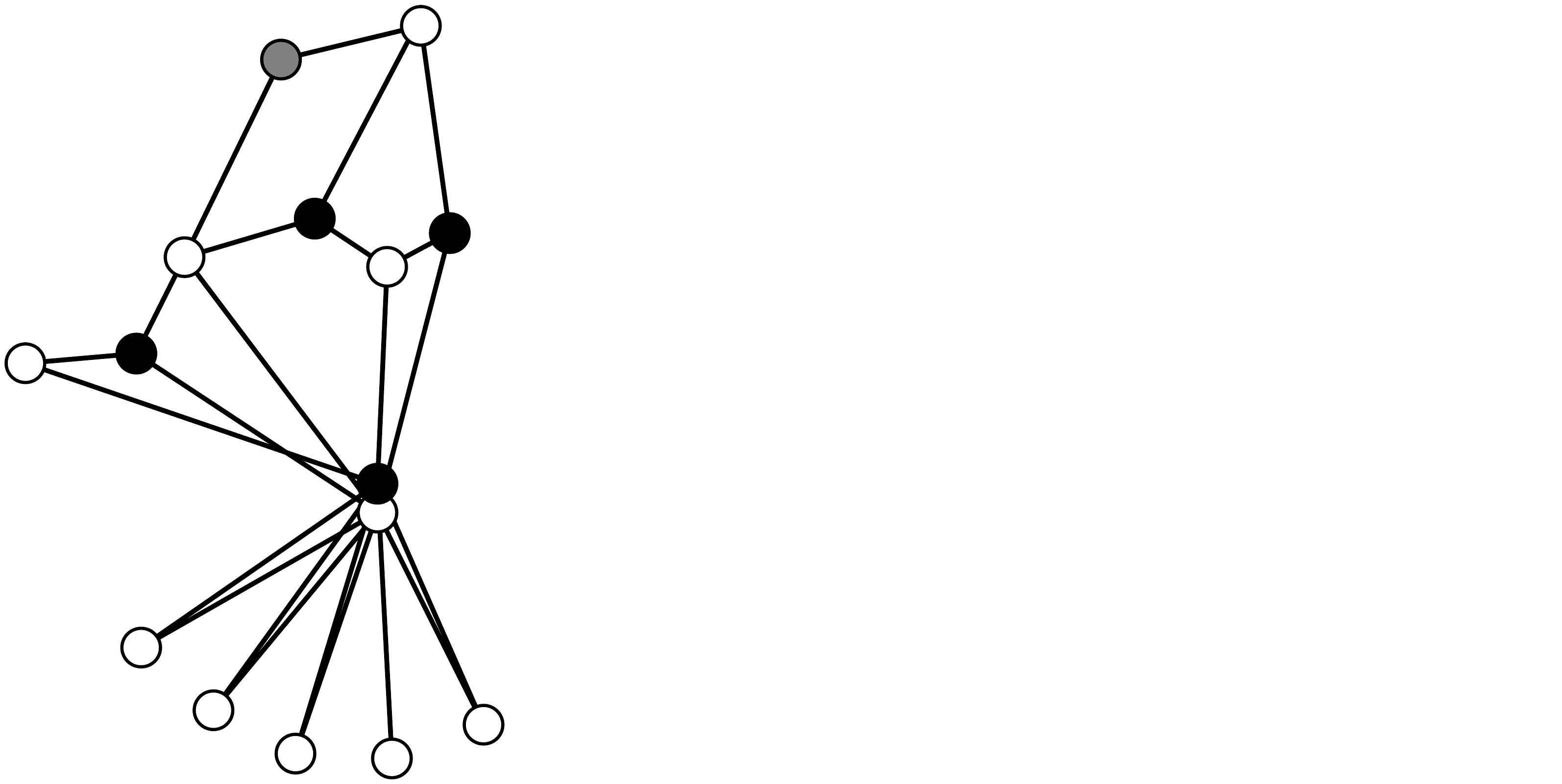}
&
\includegraphics[width=8.5cm]{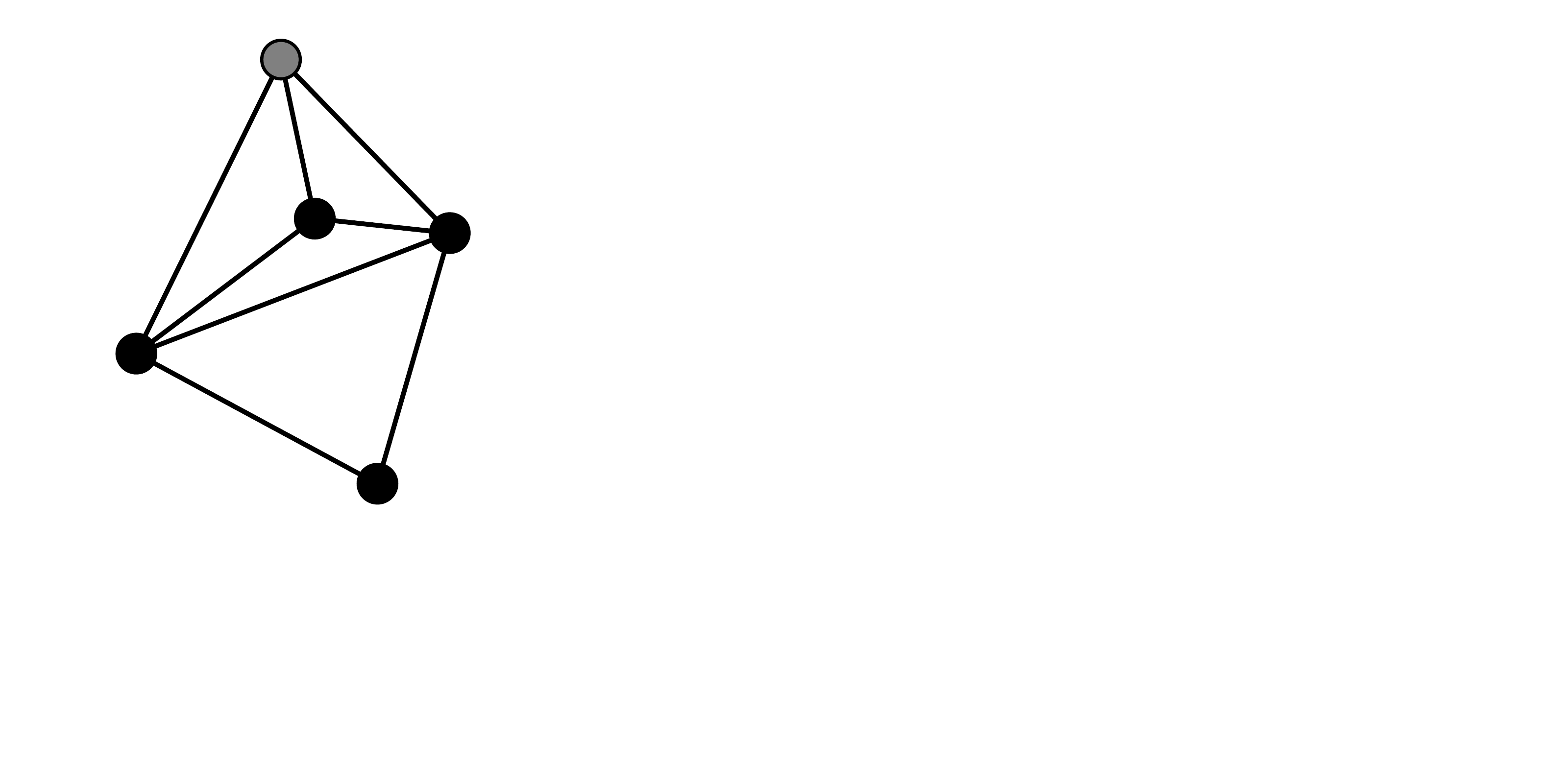}
\end{tabular}
\end{tabular}
\caption{}
\label{Fig9}
\end{figure}

\begin{figure}[h!]
\begin{tabular}{c}
\includegraphics[width=8.5cm]{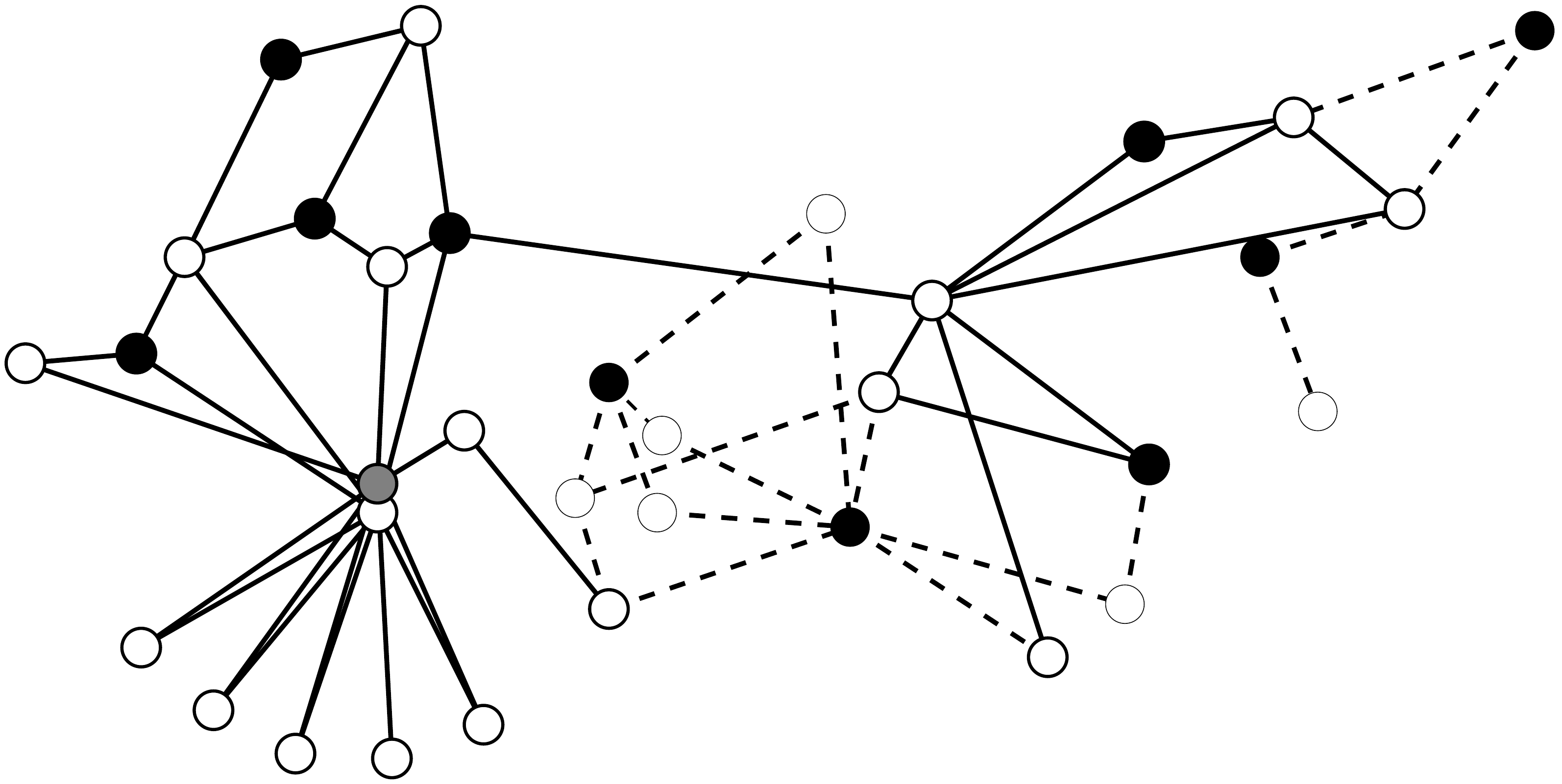}
\\[-0.2ex]
\begin{tabular}{@{}c@{\hspace{.1cm}}c@{}}
\includegraphics[width=8.5cm]{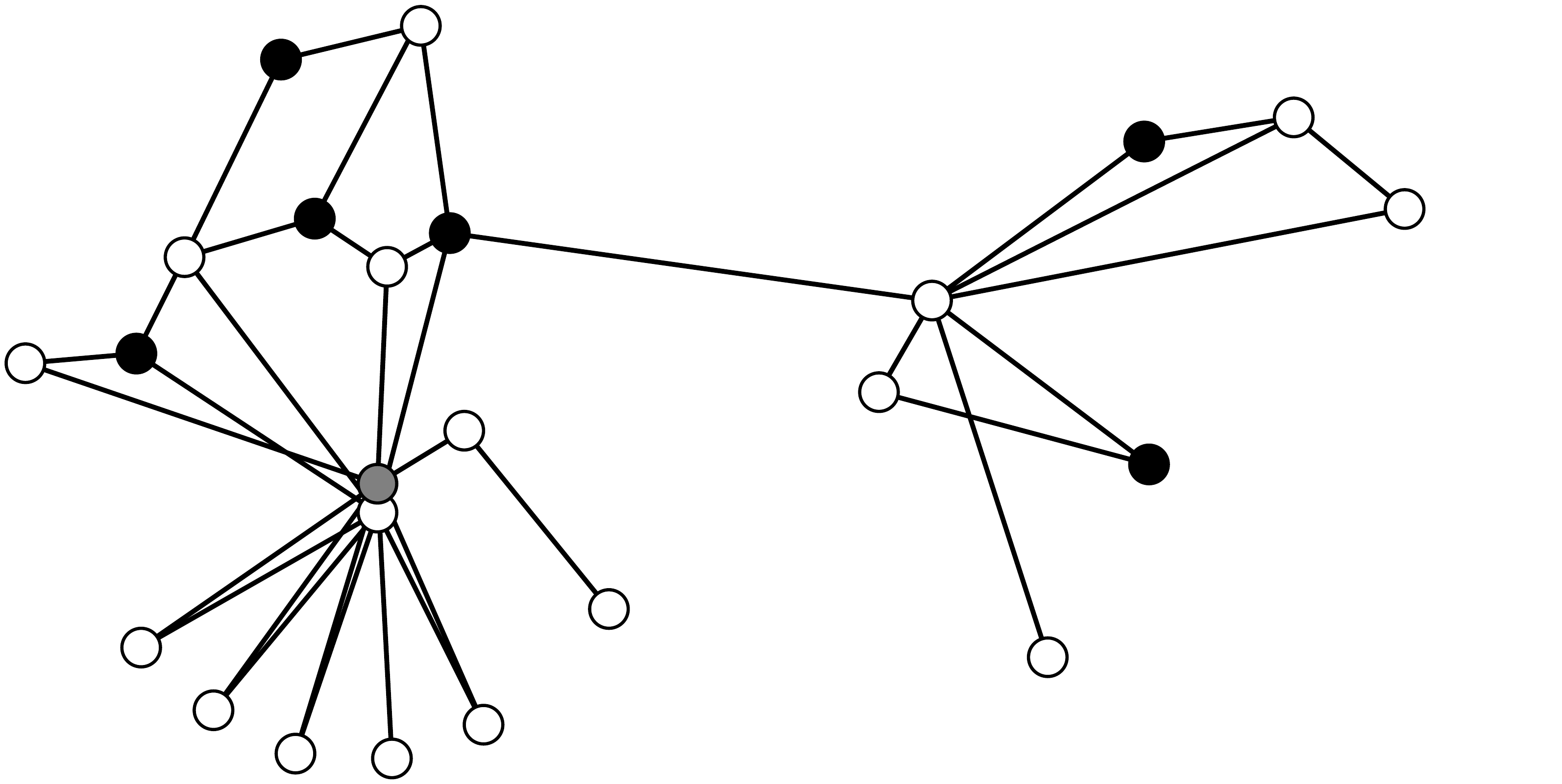}
&
\includegraphics[width=8.5cm]{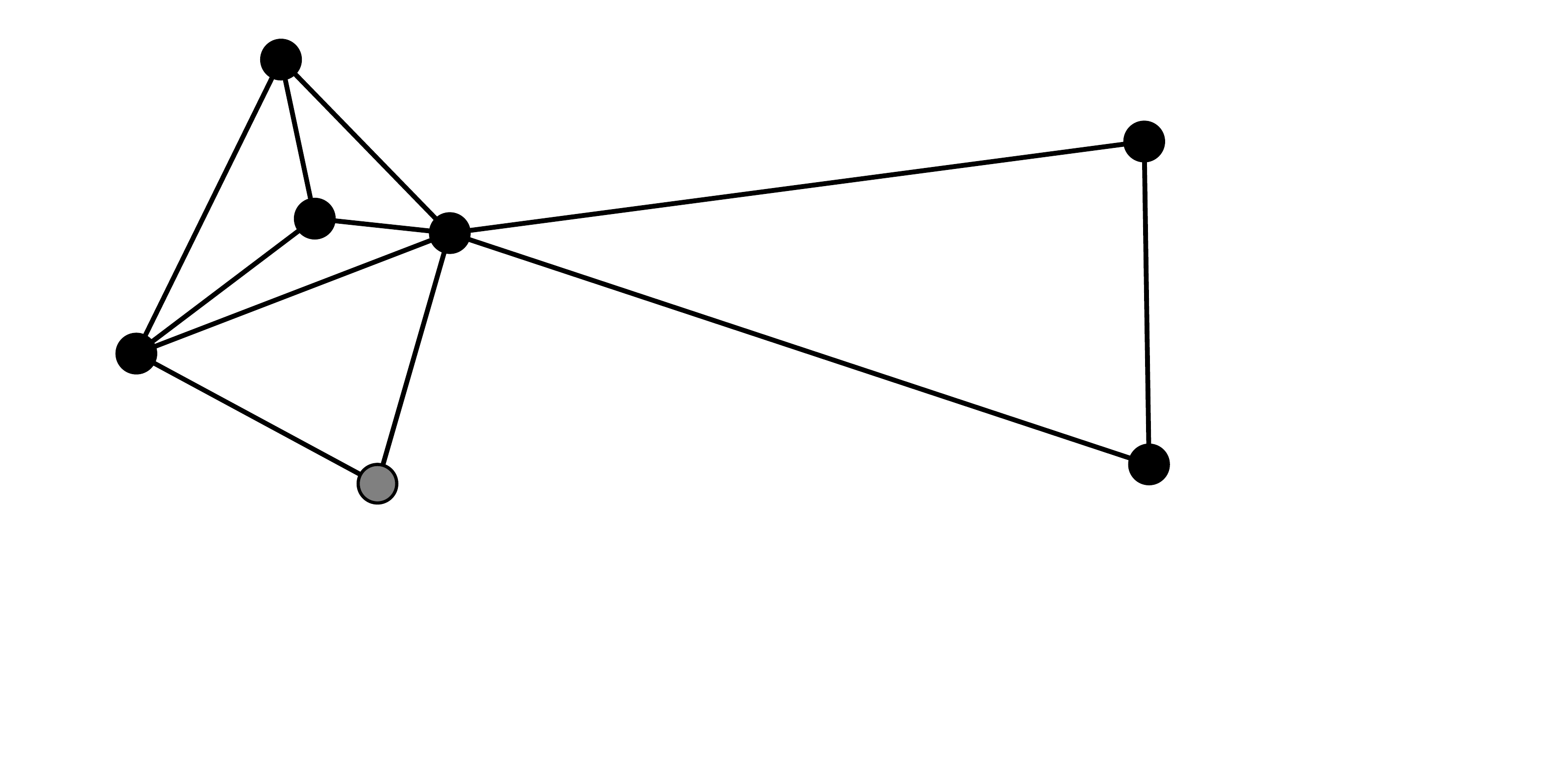}
\end{tabular}
\end{tabular}
\caption{}
\label{Fig10}
\end{figure}

\begin{figure}[h!]
\begin{tabular}{c}
\includegraphics[width=8.5cm]{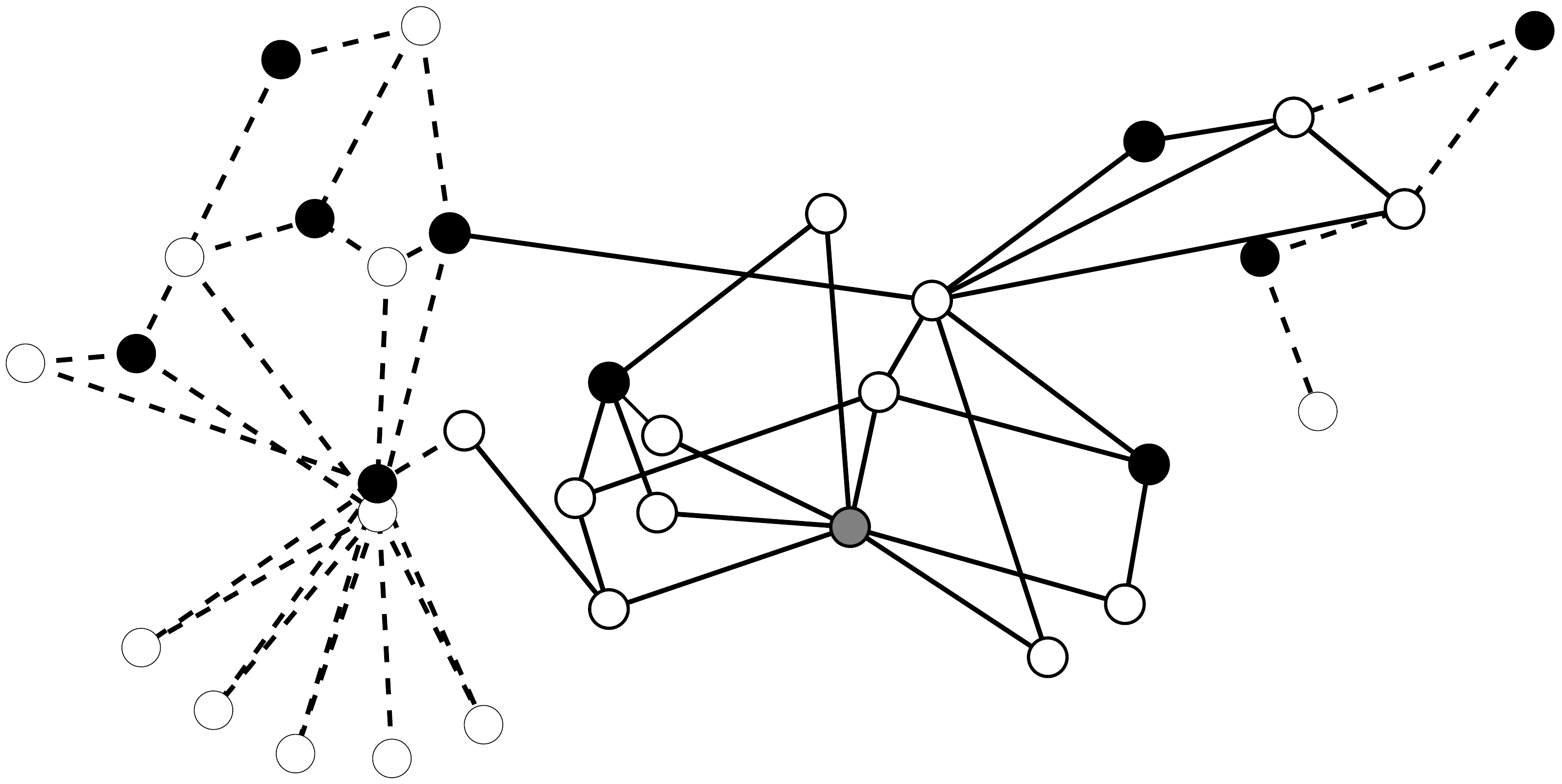}
\\[-0.2ex]
\begin{tabular}{@{}c@{\hspace{.1cm}}c@{}}
\includegraphics[width=8.5cm]{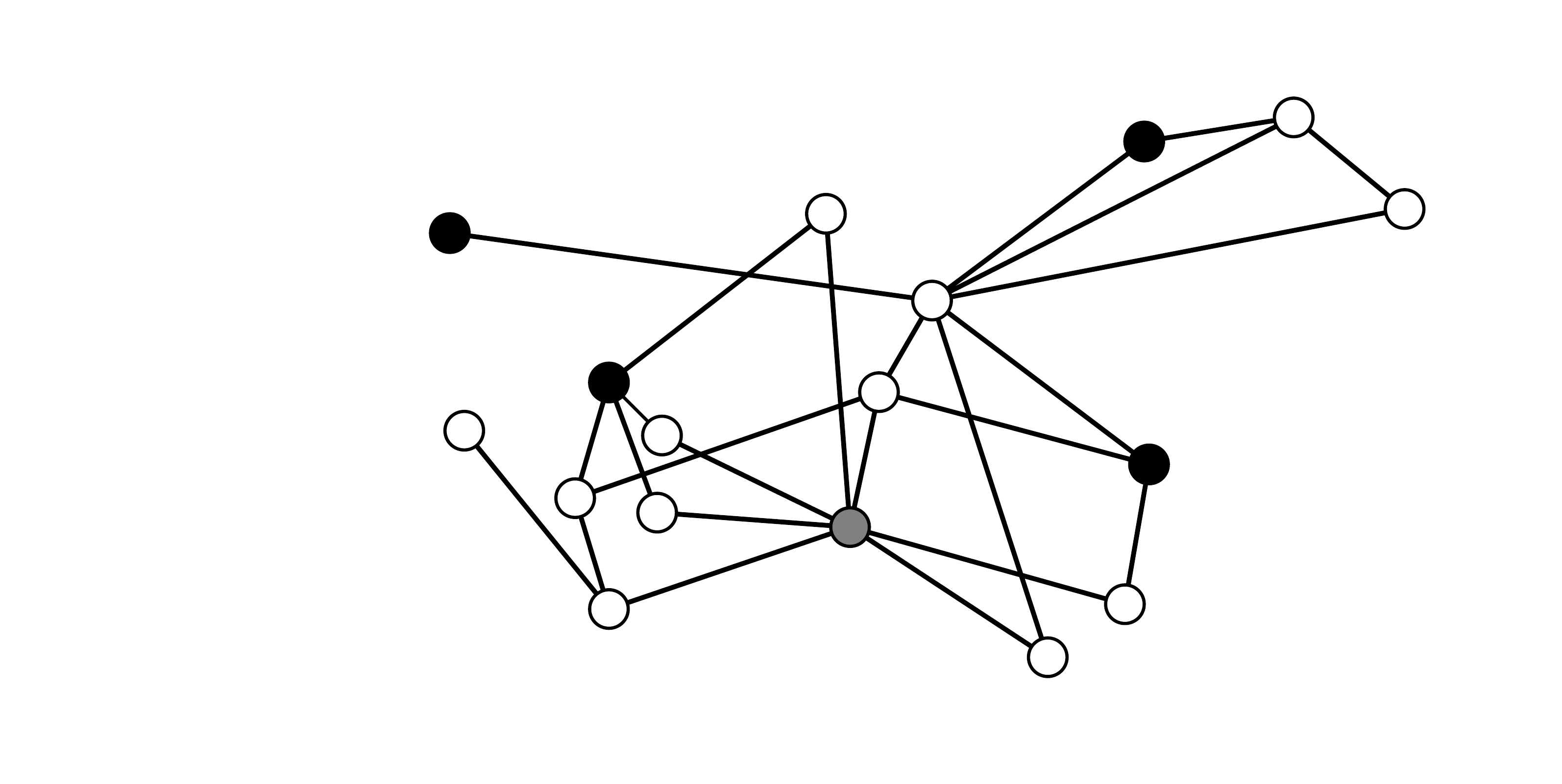}
&
\includegraphics[width=8.5cm]{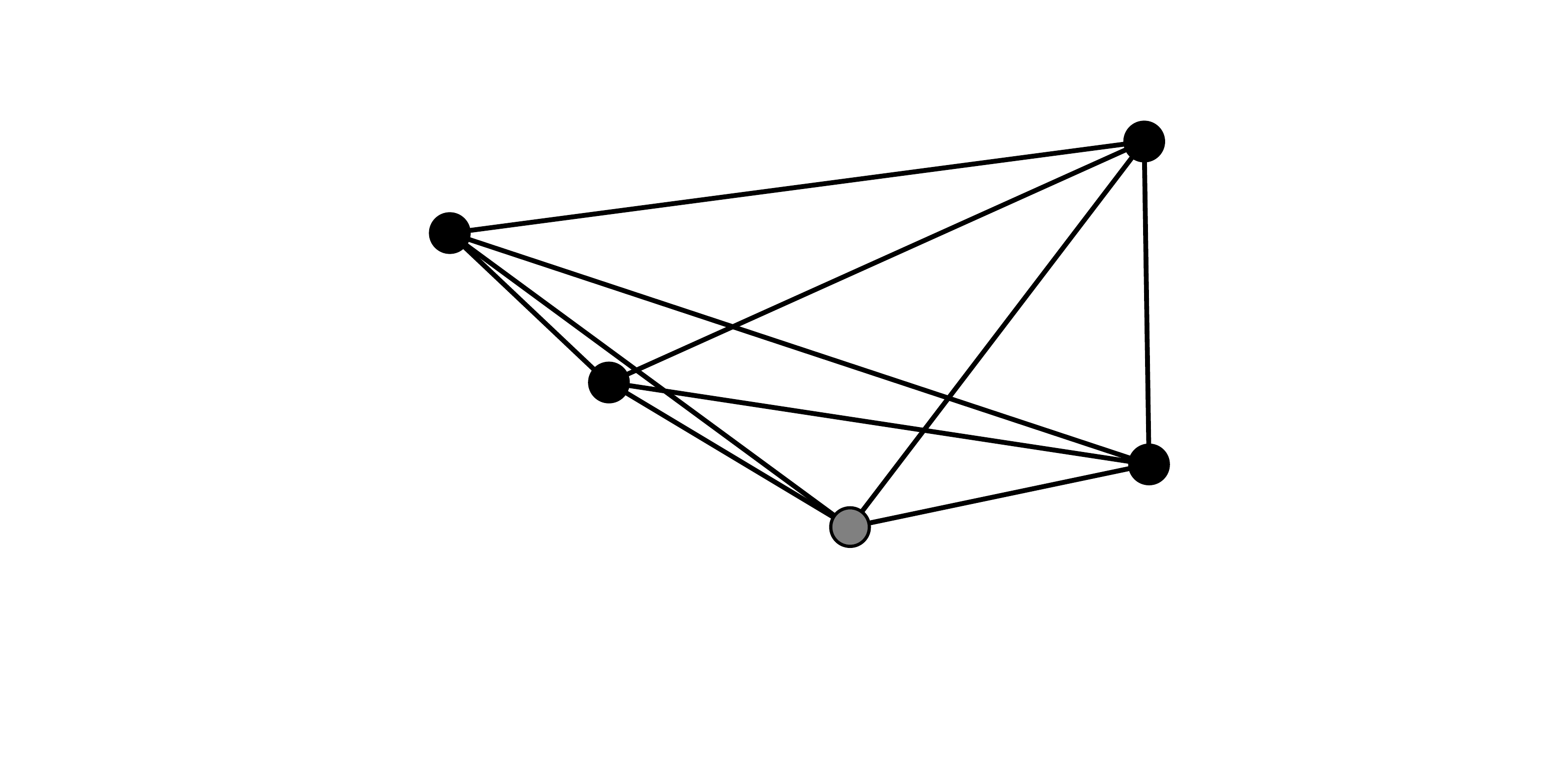}
\end{tabular}
\end{tabular}
\caption{}
\label{Fig11}
\end{figure}

\begin{figure}[h!]
\begin{tabular}{c}
\includegraphics[width=8.5cm]{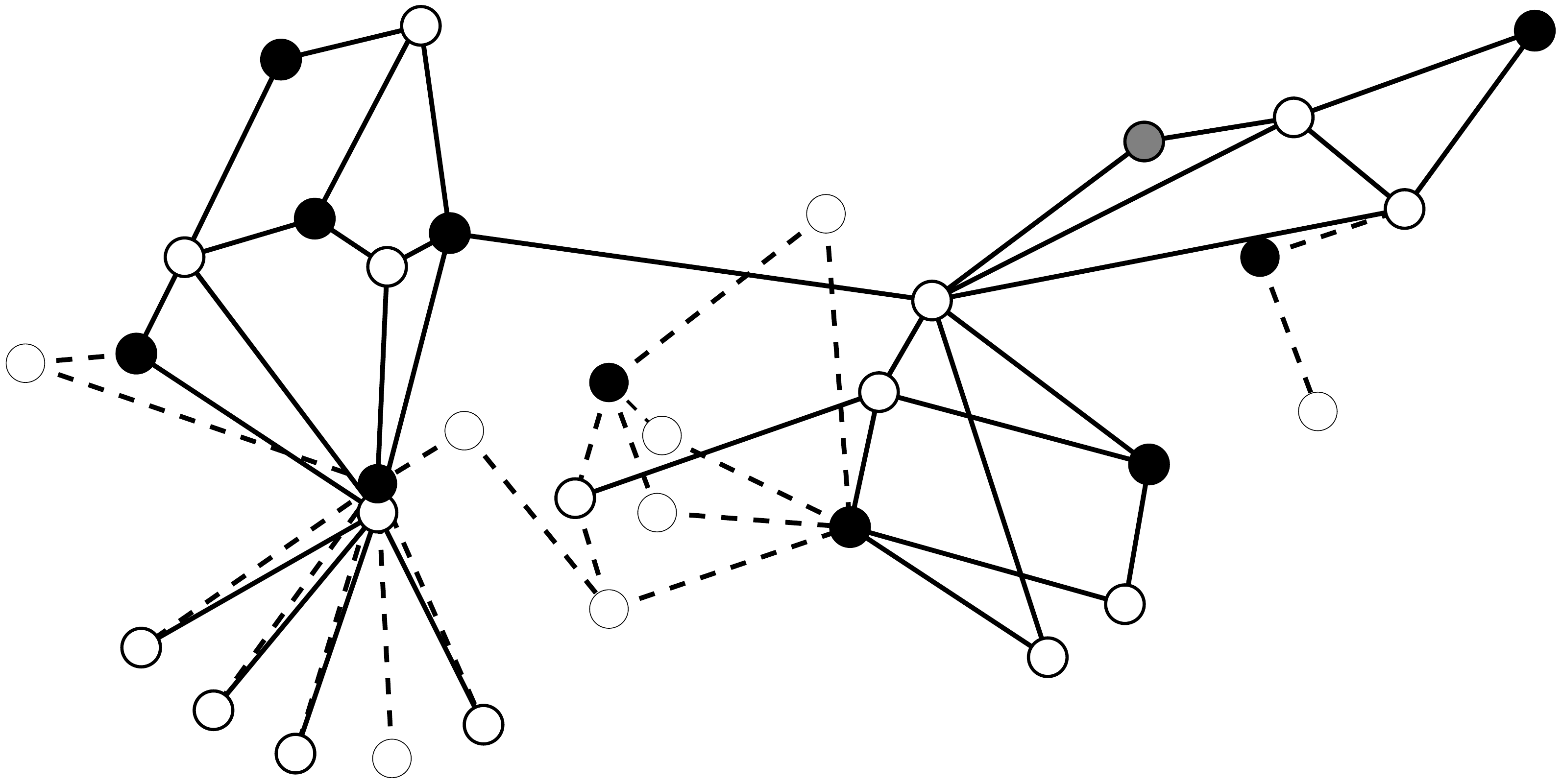}
\\[-0.2ex]
\begin{tabular}{@{}c@{\hspace{.1cm}}c@{}}
\includegraphics[width=8.5cm]{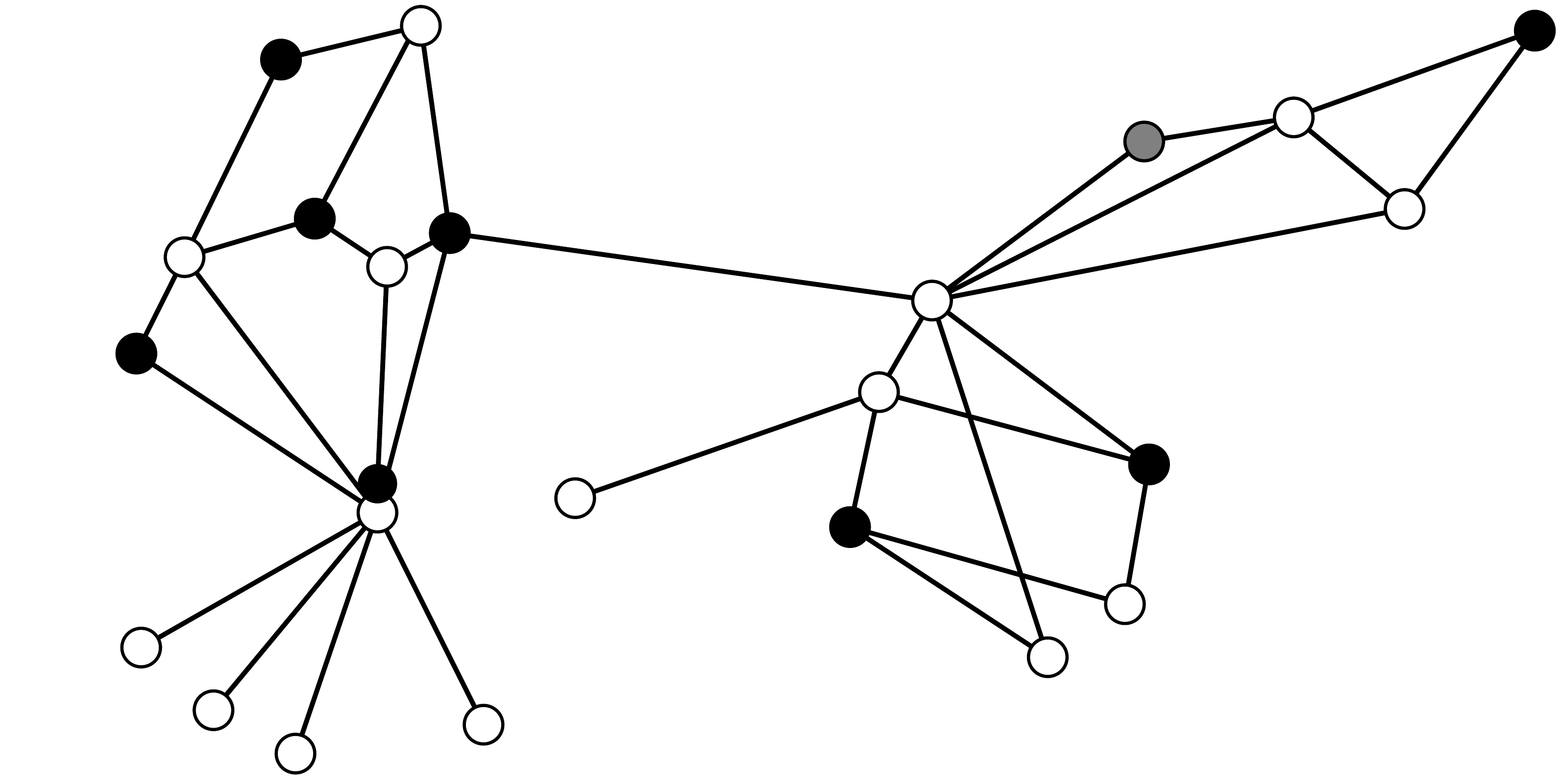}
&
\includegraphics[width=8.5cm]{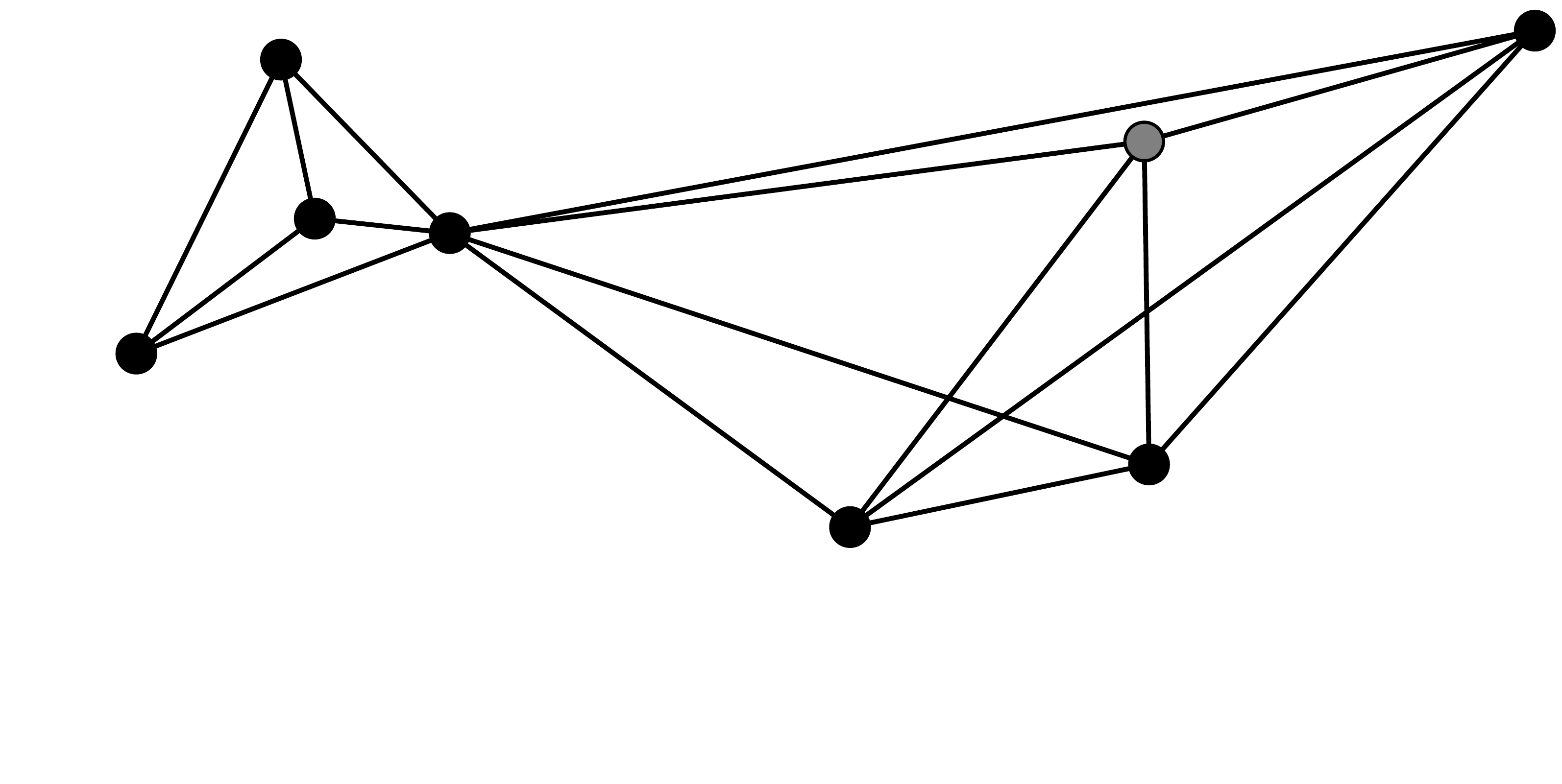}
\end{tabular}
\end{tabular}
\caption{}
\label{Fig12}
\end{figure}

\begin{figure}[h!]
\begin{tabular}{c}
\includegraphics[width=8.5cm]{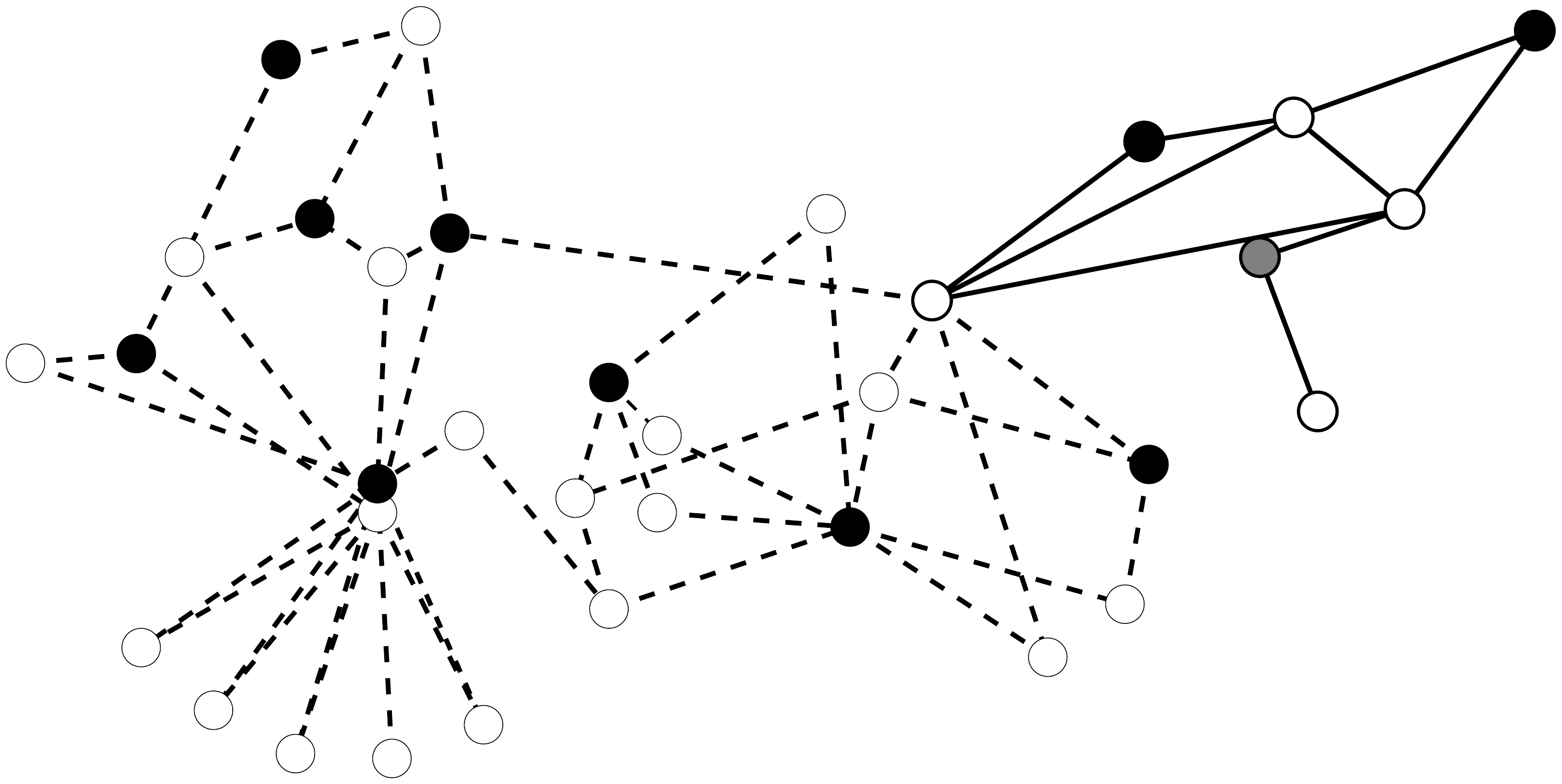} \\
\begin{tabular}{@{}c@{\hspace{.1cm}}c@{}}
\includegraphics[width=8.5cm]{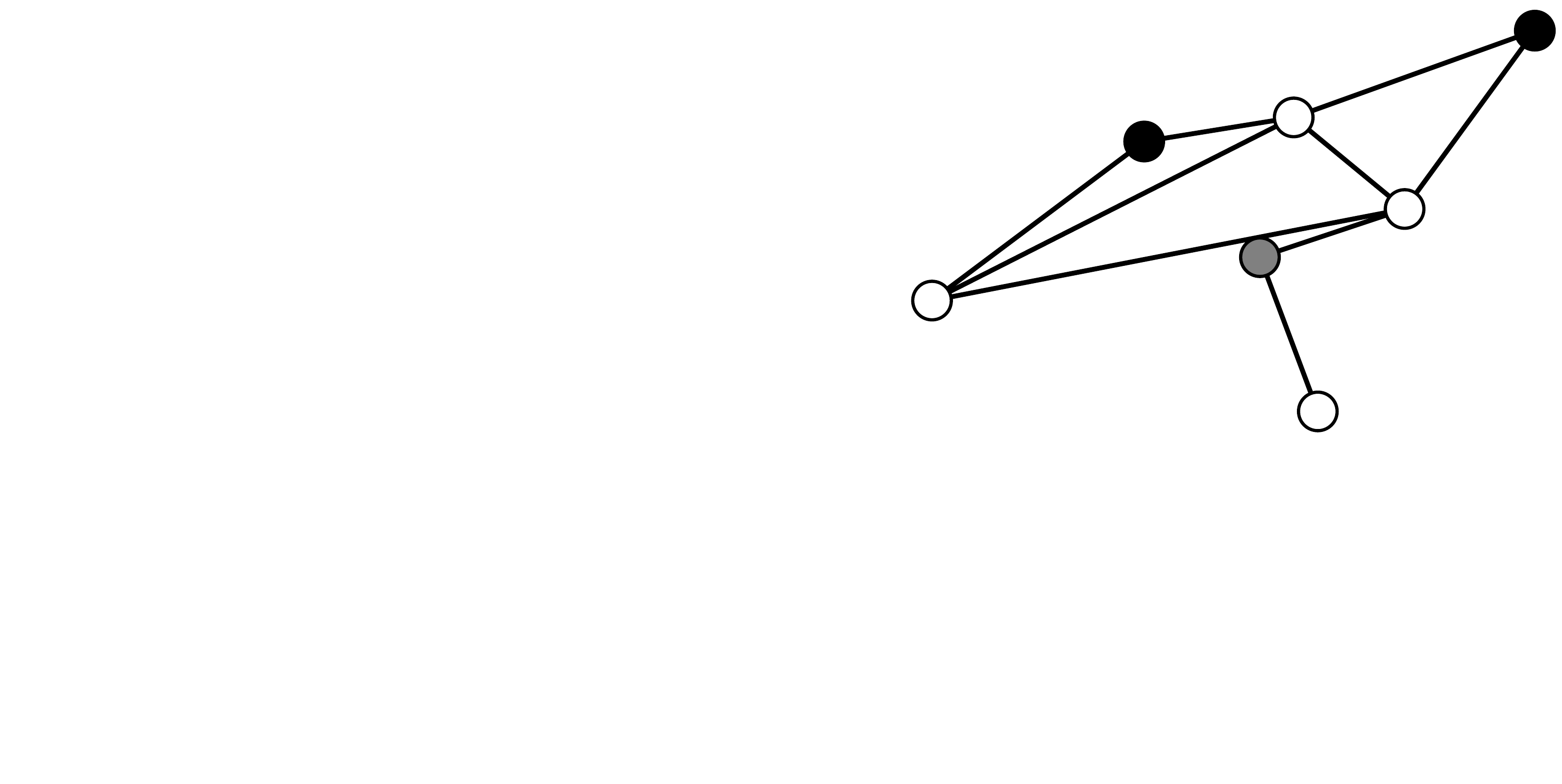}
&
\includegraphics[width=8.5cm]{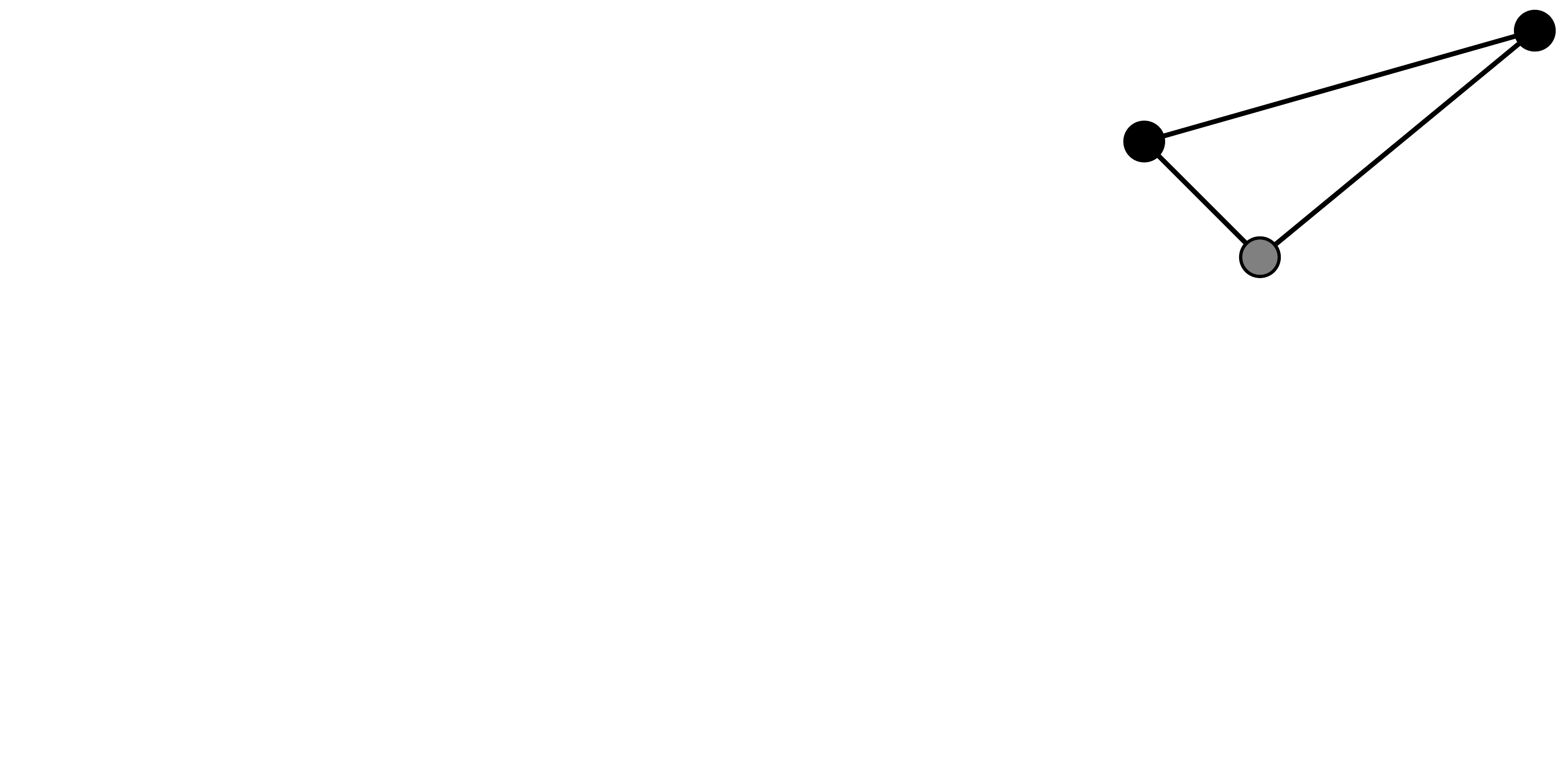}
\end{tabular}
\end{tabular}
\caption{}
\label{Fig13}
\end{figure}
\end{itemize}

There are $3$ remarks to be made regarding the defined procedure:

\begin{remark} 
The very last step in the procedure implicates the construction of two coarse graphs with nodes belonging only to 
$\mathcal{V}^1$. The first one shown on the left of Figure~\ref{Fig14} is the graph assembled from all graphs depicted 
in the lower right subfigures of Fig.~\ref{Fig9}--Fig.~\ref{Fig13}. In the second graph illustrated on the right of Figure~\ref{Fig14} 
two nodes are adjacent if there is a path between them in the original graph $K$ consisting only of nodes that are not in $\mathcal{V}^1$. 
Note that the graph on the right of Fig.~\ref{Fig14} is the adjacency graph of the global Schur complement which results from eliminating 
all nodes that do not belong to $\mathcal{V}^1$.  

As one could observe, the left graph has a sparser structure than the right one and this difference will be much more pronounced on bigger graphs when 
the size of the macrostructures is small as compared to the size of the entire graph.

\begin{figure}
\begin{tabular}{@{}c@{\hspace{.1cm}}c@{}}
\includegraphics[width=8.5cm]{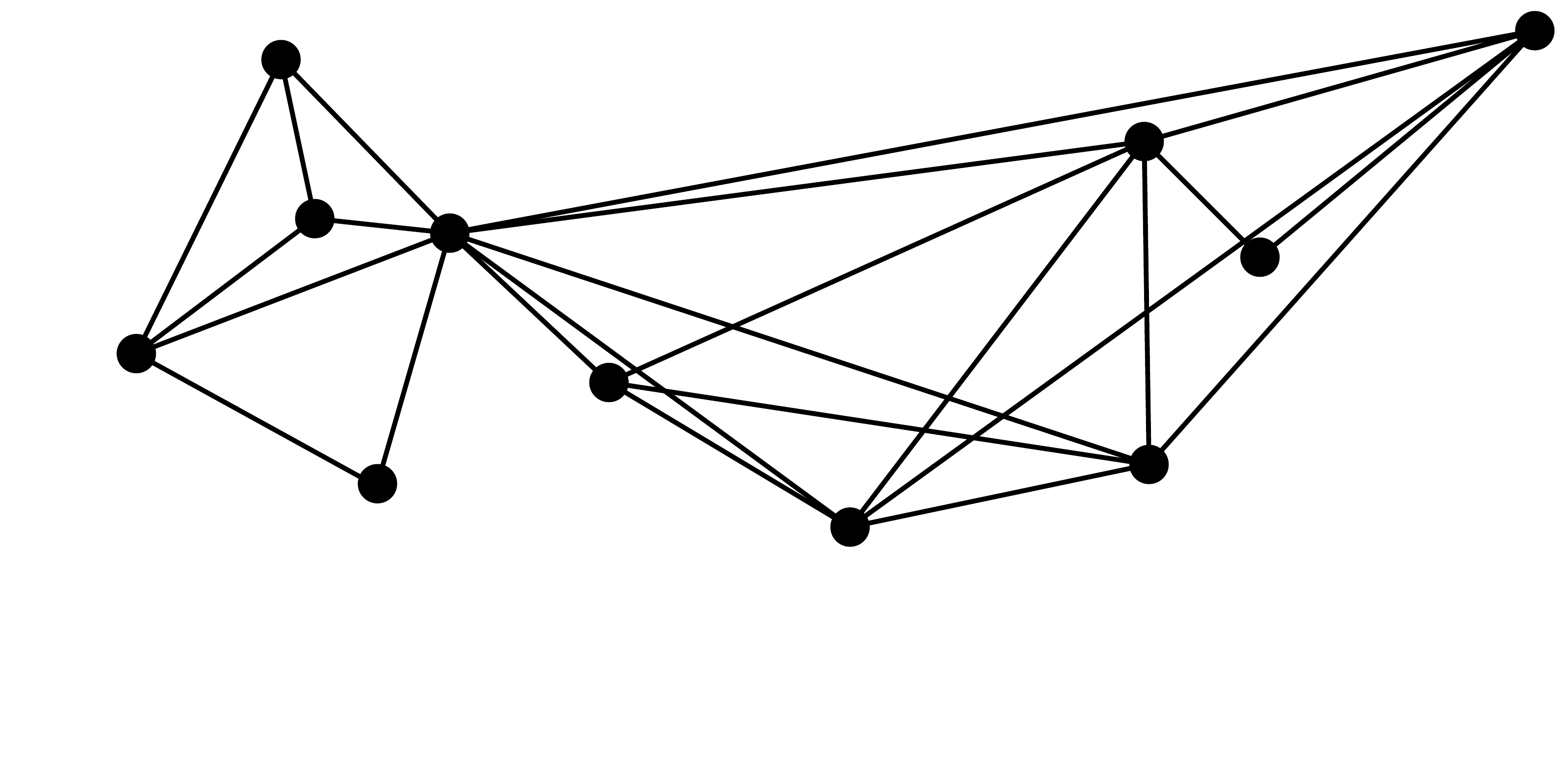}
&
\includegraphics[width=8.5cm]{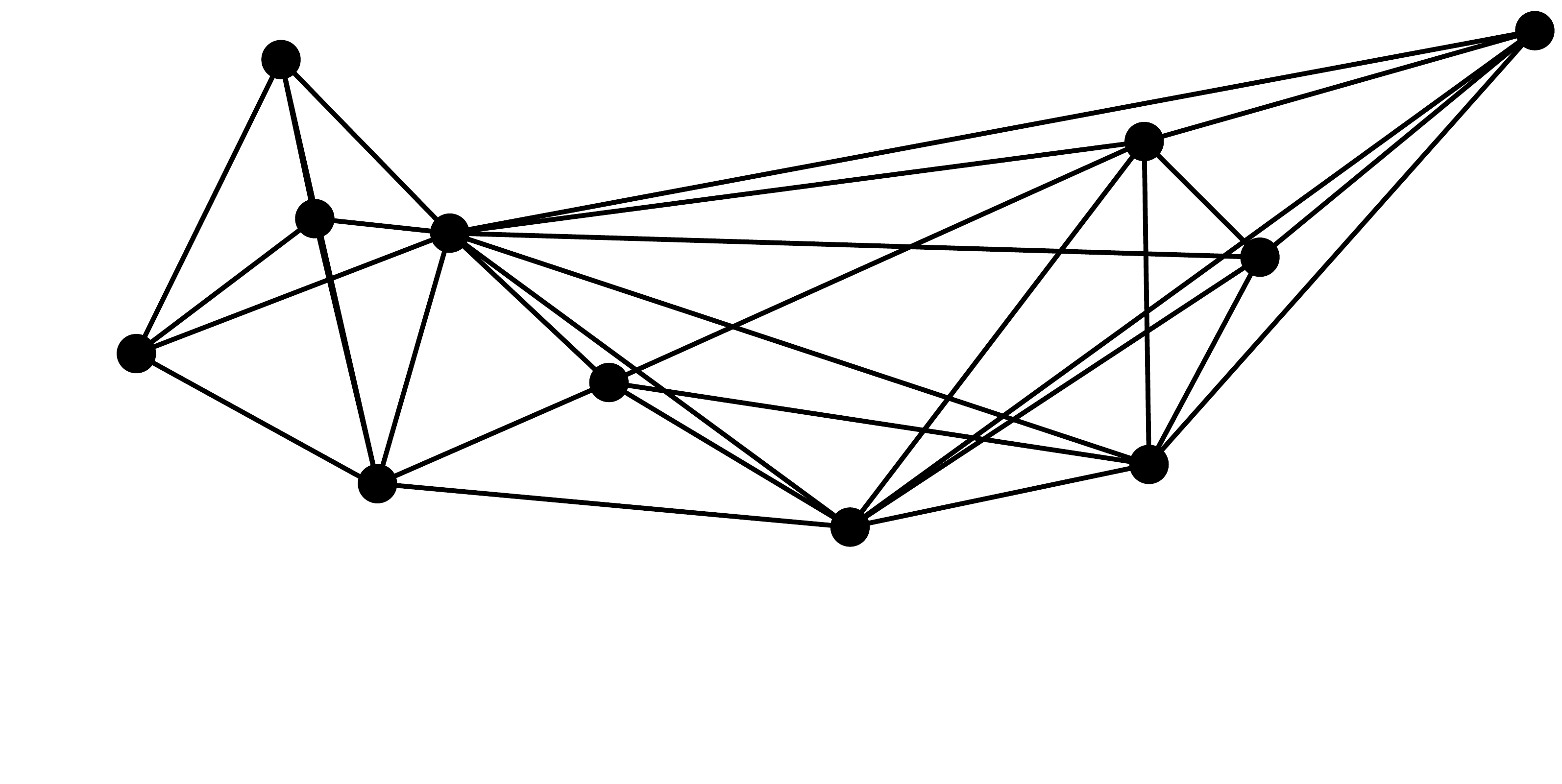}
\end{tabular}
\caption{}
\label{Fig14}
\end{figure}

\end{remark}

\begin{remark}
The described procedure fulfils the requirements defined in \eqref{eq:defs} and the coarser graph $K^1$ is connected. These statements are left as a remark as their proof is evident.
\end{remark}

\begin{remark}
There are also other ways to construct the coverings $\mathcal{G}$ and $\mathcal{F}$ of $K$. For example, 
in Step V one could choose the graph distance according to which the macrostructure subgraphs are assembled to be $2$ 
instead of $1$. It is possible to apply other assembling conditions and restrictions so long as condition~\eqref{eq:defs} is fulfilled. 
\end{remark}

\section{Auxiliary space multigrid (ASMG)}\label{ASMG}

In matrix notation the above described procedure represents the following algorithm:
\vspace{2ex}

\begin{algorithm}\label{alg}
\hrule\vspace{1ex}
\noindent
Additive Schur Complement Approximation (ASCA):
\vspace{1ex}
\hrule
\begin{enumerate}
\item Starting with a global two-level splitting of the degrees of freedom $\mathcal{D}$ into ``coarse'' ($\mathcal{D}_c$) and 
``fine''  ($\mathcal{D}_f$), that is
$$\mathcal{D} = \mathcal{D}_c \oplus \mathcal{D}_f,$$
find a covering $\mathcal{G}$ of $\mathcal{F}$ and a set of scaling
factors $\{ \sigma_{F,G} \}$.
\item For all $K_G \in \mathcal{G}$ execute the following steps:
\begin{itemize}
\item[(a)] Fix the ``local'' two-level numbering of DOF of $K_G$ 
to obtain
$$
A_G = \left[
\begin{array}{cc}
A_{G:11} & A_{G:12} \\[2mm]
A_{G:21} & A_{G:22}
\end{array}
\right]
\begin{array}{l}
\} \  \mathcal{D}_{G:c} \\[2mm]
\} \  \mathcal{D}_{G:f}
\end{array}
$$

\item[(b)] Compute the ``local'' Schur complement
$
S_G = A_{G:11} - A_{G:12} A_{G:22}^{-1} A_{G:21}.
$
\item[(c)] Determine the 
``local-to-global'' mapping $R_{G:1}=R_{G:c}$ for the 
CDOF in $\mathcal{D}_{G:c}$.
\end{itemize}
\item Compose the global Schur complement approximation $Q$ from ``local'' exact Schur complements $S_G$, i.e.,
$$
Q = \sum_{K_G \in \mathcal{G}} R_{G:c}^T S_G R_{G:c} = \sum_{K_G \in \mathcal{G}} R_{G:1}^T S_G R_{G:1}.
$$ 

Here the operators $R_{G:c}$ map a global vector from the space defined by $K^{1}$ to the local spaces 
related to the CDOF in the subgraphs $K_G$.

\end{enumerate}
\hrule
\end{algorithm}

\begin{remark}
In the ASCA algorithm the degrees of freedom $\mathcal{D}$ are vertex degrees of freedom and the two-level splitting 
$\mathcal{D} = \mathcal{D}_f \oplus \mathcal{D}_c$ is defined in Step I from the previous section.
\end{remark}

Let $n_1:=\vert\mathcal{D}_{\rm c} \vert$, $n_2:=\vert\mathcal{D}_{\rm f} \vert$ be the cardinalities of $\mathcal{D}_{\rm c}$ and $\mathcal{D}_{\rm f}$. 
We denote with $n_{G:1}$, $n_{G:2}$ the number of CDOF and FDOF associated with
$K_G$, i.e. $n_G=n_{G:1}+n_{G:2}$ where $\dim(V_G)=n_G$.

We introduce the auxiliary space $\widetilde{V}$ 
of size $\tilde{n}=n_1+(\sum_{i=1}^{n_{\mathcal G}} n_{G_i:2})$ and
a surjective mapping $\Pi : \widetilde{V} \rightarrow V$. The inclusion mapping $R^T: V \rightarrow \widetilde{V}$ has the form:
$$
R^T=\left[ \begin{array}{cc}
         I_1   & 0 \\ 0 & R_2^T
           \end{array}
\right] ,\qquad\text{where}
\qquad
R_2^T=\left[ \begin{array}{c}
           R_{1:2} \\ R_{2:2} \\ \vdots \\ R_{n_{\mathcal G}:2}
           \end{array}
\right]\in\mathbb{R}^{n_1\times\left(\sum_{i=1}^{n_{\mathcal G}} n_{G_i:2}\right)}.
$$
The matrix 
$$
\widetilde{A}:=
\left[\begin{array}{cc}
\widetilde{A}_{11} & \widetilde{A}_{12} \\
\widetilde{A}_{21} & \widetilde{A}_{22}
\end{array}
\right]$$
with blocks
$$
\widetilde{A}_{11}:=\sum_{i=1}^{n_{\mathcal G}} R^T_{i:1} A_{G_i:11} R_{i:1} ,\qquad
\widetilde{A}_{22}:=
\left[ \begin{array}{cccc}
 A_{G_1:22} &&&  \\[0.5ex]
& A_{G_2:22} &&\\[0.5ex]
& & \ddots &  \\[0.5ex]
&&& A_{G_{n_{\mathcal G}}:22}   \\[0.5ex]
                \end{array}
\right],
$$
$$
\widetilde{A}_{12}: =[R_{1:1}^T A_{G_1:12} , R_{2:1}^T A_{G_2:12}, \hdots , R^T_{n_{\mathcal G}:1} A_{G_{n_{\mathcal G}}:12}],$$
$$\widetilde{A}_{21}: =[R_{1:1}^T A_{G_1:12} , R_{2:1}^T A_{G_2:12}, \hdots , R^T_{n_{\mathcal G}:1} A_{G_{n_{\mathcal G}}:12}]^T 
$$
is SPSD and defines an energy inner product on the auxiliary space $\widetilde{V}$. 
It is evident that $A=R \widetilde{A} R^T$ and $\widetilde{A}_{22}$ is a block-diagonal matrix
whose blocks are of size $n_{G_i:2}{\times}n_{G_i:2}$ for $i=1,2,\ldots,n_{\mathcal G}$.

\subsection{Auxiliary space method}
\subsubsection{Two-grid preconditioner}\label{sec:as-2-grid-precond}

The auxiliary space preconditioner $C$ of $A$, see e.g.~\cite{K-15,X-96}, is defined as
\begin{equation}\label{eq:two-grid_0}
C^{-1}=\Pi_{\widetilde{D}} \widetilde{A}^{-1} \Pi^T_{\widetilde{D}},
\end{equation}
where the surjective mapping $\Pi_{\widetilde{D}}: \widetilde{V} \rightarrow V$ is given by
\begin{equation}\label{eq:Pi}
\Pi_{\widetilde{D}}=(R \widetilde{D} R^T)^{-1} R \widetilde{D} \qquad \text{and} \qquad
\widetilde{D}=\left[
             \begin{array}{cc}
              I & 0 \\
              0 &  \widetilde{D}_{22}
             \end{array}
           \right].
\end{equation}

The preconditioner~(\ref{eq:two-grid_0}) can be further generalized, see e.g.~\cite{K-16}, as 
\begin{equation}\label{eq:two-grid_3}
B^{-1} := \overline{M}^{-1} + (I - M^{-T} A) C^{-1} (I - A M^{-1}),
\end{equation}
see e.g.~\cite{X-96}, where $C$ is given by~\eqref{eq:two-grid_0}, $M$ is an $A$-norm convergent smoother and
$\overline{M}=M(M+M^T-A)^{-1}M^T$ is the corresponding symmetrised smoother. 
Results related to the condition number estimate of $B^{-1}A$ can be found in~\cite{K-15}.

\begin{remark}

Different choices for $\widetilde{D}_{22}$ in~\eqref{eq:Pi} are possible, for example $\widetilde{D}_{22}=\widetilde{A}_{22}$,
$\widetilde{D}_{22}={\rm diag}(\widetilde{A}_{22})$ or $\widetilde{D}_{22}={\rm tridiag}(\widetilde{A}_{22})$. Discussion about the effect of these choices 
on the convergence and complexity of the method can be found in~\cite{K-15}.
\end{remark}

\subsubsection{Multigrid precondtioner}\label{sec:asmg}

The procedure in Section~\ref{sec:prel_not} can be recursively applied thus resulting 
in the construction of the nested spaces $\mathcal{V}^{\ell}\subset\ldots\subset \mathcal{V}^0=\mathcal{V}$ related to the graphs 
$K_{\ell}=(\mathcal{V}^{\ell},\mathcal{E}^{\ell}),\ldots ,K_0=K=(\mathcal{V},\mathcal{E})$.  
Then
$$A^{(0)}:=A$$ denotes the Laplacian matrix for the original graph $K$. 
Let the index $k$ be in $\{0,\ldots,\ell-1\}$ and consider 
the sequence of auxiliary space matrices $\widetilde{A}^{(k)}$ in the two-by-two
block factorised form
\begin{equation}\label{factorizationK}
({\widetilde{A}}^{(k)})^{-1} =
(\widetilde{L}^{(k)})^T \widetilde{D}^{(k)} \widetilde{L}^{(k)} , 
\end{equation}
where
$$
\widetilde{L}^{(k)} =
\left [
\begin{array}{cc}
I & -\widetilde{A}^{(k)}_{12} (\widetilde{A}^{(k)}_{22})^{-1} \\
 & I
\end{array}
\right ]  \qquad \text{and}\qquad
\widetilde{D}^{(k)} =
\left [
\begin{array}{cc}
{Q^{(k)}}^{-1} & \\
& (\widetilde{A}^{(k)}_{22})^{-1} 
\end{array}
\right ].
$$
The next coarser level matrix  $A^{(k+1)}$ is defined through the
additive Schur complement approximation $Q^{(k)}$, i.e.
\begin{equation}\label{factorizationK2}
 A^{(k+1)}:=Q^{(k)}.
\end{equation}
The (nonlinear) AMLI-cycle ASMG preconditioner $C^{(k)}$ at level $k$ has the form
\begin{equation}\label{multigrid_preconditioner_0}
{C^{(k)}}^{-1} :=
\Pi^{(k)}
 (\widetilde{L}^{(k)})^T 
 \left [
\begin{array}{cc}
C_{\nu}^{(k+1)} & \\
& \widetilde{A}^{(k)}_{22} 
\end{array}
\right ]^{-1}
\widetilde{L}^{(k)} {\Pi^{(k)}}^T.
\end{equation}
Here $\left[C_{\nu}^{(k+1)}\right]^{-1}$ is an approximation of the inverse of $A^{(k+1)}$ where at the coarsest level we set
\begin{equation}
\left[C_{\nu}^{(\ell)}\right]^{-1} := {A^{(\ell)}}^{-1}
\end{equation}
while for $ k< \ell-1$ a matrix polynomial of the form
\begin{equation}\label{eq:poly1}
\left[ C_{\nu}^{(k+1)}\right]^{-1} := (I-p^{(k)}({C^{(k+1)}}^{-1}A^{(k+1)})){A^{(k+1)}}^{-1} 
\end{equation}
is used to determine $\left[C_{\nu}^{(k+1)}\right]^{-1}$.

The incorporation of pre- and post-smoothing results in the following AMLI-cycle ASMG preconditioner: 
%
\begin{equation}\label{multigrid_preconditioner}
{B^{(k)}}^{-1} :=
{\overline{M}^{(k)}}^{-1} + (I - {M^{(k)}}^{-T} A^{(k)})
\Pi^{(k)}
 (\widetilde{L}^{(k)})^T  {\overline{D}^{(k)}}^{-1}
\widetilde{L}^{(k)} {\Pi^{(k)}}^T (I - A^{(k)} {M^{(k)}}^{-1}),
 \end{equation}
where
$$
\quad \overline{D}^{(k)} :=
\left [
\begin{array}{cc}
B_{\nu}^{(k+1)}& \\
& \widetilde{A}^{(k)}_{22}
\end{array}
\right ] \quad
\text{and} \quad
\left[ B_{\nu}^{(k+1)} \right]^{-1} = q^{(k)}({B^{(k+1)}}^{-1}A^{(k+1)})){B^{(k+1)}}^{-1}.
$$

We write $[B_{\nu}^{(k+1)}]^{-1}$ as $B_{\nu}^{(k+1)}[\cdot]$  
(and $[C_{\nu}^{(k+1)}]^{-1}$ as $C_{\nu}^{(k+1)}[\cdot])$) to denote that for the nonlinear AMLI-cycle ASMG method
the coarse-level preconditioner is a nonlinear mapping whose action on a vector $\d$ is realised by $\nu$ iterations
with a preconditioned Krylov subspace method.
In what follows, we employ the generalised conjugate gradient method 
and hence denote $B_{\nu}^{(k+1)}[\cdot] \equiv B^{(k+1)}_{\text{GCG}}[\cdot]$
(and $C_{\nu}^{(k+1)}[\cdot] \equiv C^{(k+1)}_{\text{GCG}}[\cdot]$).

\begin{remark}
The method of auxiliary space preconditioning dates back to the works of Matsokin and Nepomnyashchikh, see~
\cite{M-85,N-91,N-95}.
\end{remark}

When applied recursively, the procedure in Section~2.2 generates sequences of macrostructure and structure subgraphs fitted to the multilevel 
splitting of the DOF. It is clear that the focus node of a macrostructure subgraph will always be 
a focus node of a structure subgraph on the next coarser level. However, not all possible ways of defining the structures, macrostructures and the adjacency 
pattern of the coarser graph in Step II, Step IV and Step III respectively will be consistent in the sense that the coarse nodes in a given macrostructure coincide with the 
nodes of the corresponding structure on the next coarser level. 

One simple strategy to avoid such inconsistency is to run the described procedure recursively only to generate the multilevel splitting of the DOF and  
the macrostructure-structure relation needed in~\eqref{s_ms}. To generate the structures on the finest level one could apply a rule, as in Step II of the procedure, 
and then use the obtained structures in the assembly of the finest level macrostructures. 
On all coarser levels the structures could then directly be associated with adjacency graphs of the local Schur complement matrices computed in Step 2(b) of Algorithm~3.1, 
cf. also with~\eqref{factorizationK2}. 

It is worth mentioning that this problem of inconsistency does not appear for the particular choice of defining the structures, macrostructures and 
coarse subgraphs presented in the description of the procedure in Section~2.2. 
As it is easy to be seen, in this particular setting the coarse vertices that a macrostructure subgraph contains are either at a graph distance $2$ or $4$ to 
the focus node in the original graph $K$ and in the coarser graph $K^1$ exactly these and no other coarse nodes are at a graph distance $1$ or $2$ to this node.

\section{Complexity of the ASMG preconditioner}\label{sec:complexity}

The computational complexity of the ASMG preconditioner relates directly to the 
work spent on constructing the precondtioner and work for its utilization. 
The work is determined by the number of arithmetic operations which is mainly 
affected by the sparsity of the involved matrices. For that reason in what follows we want to comment on the 
sparsity of the coarse-level matrices generated by Algorithm~3.1.  

Let $\mathcal{S}$ be the set containing the undirected unweighted graphs $K_{S_G}=(\mathcal{V}_{S_G},\mathcal{E}_{S_G})$ 
corresponding to the adjacency matrices  
of the local Schur complements $S_G$ as computed in Step 2~(b) in the algorithm. Here 
$\mathcal{V}_{S_G}$ and $\mathcal{E}_{S_G}$ designate respectively the set of vertices and the set of edges. 
Obviously, 
$\mathcal{V}_{S_G}\subset \mathcal{V}_G$,
however, $\mathcal{E}_{S_G}$ is not a subset of $\mathcal{E}_G$. Frequently but not always, $K_{S_G}$ will be a so-called clique, i.e., a graph in
which every pair of distinct vertices (DOF) is connected by an edge.

Now, let the coarse-level matrix $A^{(k+1)}$ be defined via \eqref{factorizationK2} where $Q^{(k)}=Q$ results from ASCA, i.e.,  from Algorithm~\ref{alg}, Step 3. 
The following lemma characterizes the sparsity pattern of the coarse-level matrix $Q$. 

\begin{lemma}\label{lem:sparsity}
Let 
$$Q=(q_{ij})_{i,j=1}^{\vert \mathcal{D}_c\vert}= \sum_{K_G \in \mathcal{G}} R_{G:1}^T S_G R_{G:1}.
$$  
The entry $q_{ij}$ of $Q$ is zero for any pair of coarse DOF $d_i$, $d_j\in \mathcal{D}_c$ if there does not exist a graph 
$K_{S_G}\in \mathcal{S}$ for which both $d_i$ and $d_j$ are in $\mathcal{V}_{S_G}\;(\equiv\mathcal{D}_{G:c})$.
\end{lemma}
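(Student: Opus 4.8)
The plan is to prove the statement by a direct examination of the support of the assembled sum, treating each $R_{G:1}$ as a Boolean selection matrix. First I would recall that $R_{G:1}=R_{G:c}$ maps a global coarse vector in $\mathbb{R}^{\vert\mathcal{D}_c\vert}$ onto the local coarse DOF of $K_G$; concretely it is the $n_{G:1}\times\vert\mathcal{D}_c\vert$ matrix whose $(\alpha,i)$ entry equals $1$ when the local coarse DOF indexed by $\alpha$ coincides with the global coarse DOF $d_i$, and $0$ otherwise. Each row of $R_{G:1}$ thus carries exactly one nonzero entry, and the column $i$ of $R_{G:1}$ is nonzero precisely when $d_i\in\mathcal{D}_{G:c}=\mathcal{V}_{S_G}$.

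Next I would expand the global entry directly from the assembly formula. Since $Q=\sum_{K_G\in\mathcal{G}} R_{G:1}^T S_G R_{G:1}$, linearity of the entrywise map gives
$$
q_{ij}=\sum_{K_G\in\mathcal{G}} \bigl(R_{G:1}^T S_G R_{G:1}\bigr)_{ij}
=\sum_{K_G\in\mathcal{G}} \sum_{\alpha,\beta} (R_{G:1})_{\alpha i}\,(S_G)_{\alpha\beta}\,(R_{G:1})_{\beta j}.
$$
I would then invoke the support description of $R_{G:1}$: the factor $(R_{G:1})_{\alpha i}$ can be nonzero only if $d_i\in\mathcal{D}_{G:c}$, and the factor $(R_{G:1})_{\beta j}$ only if $d_j\in\mathcal{D}_{G:c}$. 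Hence the inner double sum, and therefore the whole contribution of the macrostructure $K_G$, vanishes unless both $d_i$ and $d_j$ lie in $\mathcal{V}_{S_G}\,(\equiv\mathcal{D}_{G:c})$.

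To finish, I would assume the hypothesis of the lemma, namely that there is no graph $K_{S_G}\in\mathcal{S}$ with $d_i,d_j\in\mathcal{V}_{S_G}$. By the previous step every summand indexed by $K_G\in\mathcal{G}$ is then identically zero, so $q_{ij}=0$, which is exactly the claim.

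I do not expect a genuine obstacle here: the result is an assembly/support statement and the argument is essentially bookkeeping. The only point requiring care is the identification $\mathcal{V}_{S_G}\equiv\mathcal{D}_{G:c}$, so that ``coarse DOF of $K_G$'' and ``vertex of $K_{S_G}$'' are interchangeable, together with the fact that $R_{G:1}$ has a single nonzero per row, which is what makes the support argument clean. It is worth emphasising that only the stated direction---no common $K_{S_G}$ implies $q_{ij}=0$---is being proved; the converse could fail through cancellation among the local contributions $(S_G)_{\alpha\beta}$, and no such claim is needed.
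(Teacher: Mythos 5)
Your proof is correct, and it takes a more elementary route than the paper's. The paper argues by contradiction: assuming $q_{ij}\neq 0$, it picks a macrostructure $K_G$ with $\e_j^T R_{G:1}^T S_G R_{G:1}\,\e_i\neq 0$, invokes the identity $\e_j^T R_{G:1}^T S_G R_{G:1}\,\e_i=\lambda\bigl(S_G R_{G:1}\,\e_i \e_j^T R_{G:1}^T\bigr)$ (the nonzero eigenvalue of the rank-one product recovers the bilinear form), concludes that the rank-one matrix $R_{G:1}\,\e_i \e_j^T R_{G:1}^T$ is nonzero, and derives a contradiction with the definition of $R_{G:1}$ when $d_i,d_j$ do not both lie in $\mathcal{D}_{G:1}$. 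You instead expand $q_{ij}$ entrywise and observe that each summand carries the factors $(R_{G:1})_{\alpha i}(R_{G:1})_{\beta j}$, which vanish unless $d_i$ and $d_j$ both belong to $\mathcal{D}_{G:c}\,(\equiv\mathcal{V}_{S_G})$; under the hypothesis every local contribution is therefore zero. Both proofs rest on the same structural fact --- the columns of the Boolean selection matrix $R_{G:1}$ are supported exactly on the local coarse DOF --- but your direct support bookkeeping avoids the contradiction framing and the eigenvalue detour entirely, which makes the argument shorter and, arguably, more transparent; the paper's spectral identity buys nothing here beyond restating that a bilinear form is nonzero only if the associated rank-one matrix is. Your closing caveat that only the stated implication holds, since cancellation among the $(S_G)_{\alpha\beta}$ could make $q_{ij}=0$ even when a common $K_{S_G}$ exists, is also accurate and consistent with the one-directional formulation of the lemma.
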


\begin{proof}

Assume that 
$$q_{ij}=\sum_{K_G\in\mathcal{G}}\langle S_G R_{G:1}\, \e_i, R_{G:1}\, \e_j\rangle\neq 0,$$ 
where  $\e_i$ and $\e_j$ denote the $i$-th and $j$-th
canonical basis vectors in $ \mathbb{R}^{\vert \mathcal{D}_c\vert}$, respectively.
Then there exists $K_G$ for which
$\langle S_G R_{G:1}\, \e_i, R_{G:1}\, \e_j \rangle =\e_j^T R_{G:1}^T S_G R_{G:1} \, \e_i \neq 0$. 

Hence, from the identity $\e_j^T R_{G:1}^T S_G R_{G:1}\, \e_i=\lambda(S_G R_{G:1}\, \e_i \e_j^T R_{G:1}^T)$, where $\lambda(M)$ denotes the 
largest-in-magnitude eigenvalue of any square real matrix $M$, it follows that the rank one matrix $R_{G:1} \e_i \e_j^T R_{G:1}^T$ has to be different 
from the matrix of all zeros. In view of the definition of $R_{G:1}$ the condition $d_i$ and $d_j$ not both belonging to $\mathcal{D}_{G:1}$, however, 
implies that $R_{G:1}\,\e_i \e_j^T R_{G:1}^T=0$ which is a contradiction. 
\end{proof}

The next lemma gives conditions which control the sparsity of the Schur complement approximation $Q$.
\begin{lemma}
Let $(\widetilde{\mathcal{V}},\widetilde{\mathcal{E}})$ be the the (factor) graph representing the adjacency of the graphs belonging to $\mathcal{S}$, defined by 
identifying each graph $K_{S_G}=(\mathcal{V}_{S_G},\mathcal{E}_{S_G})$ with a vertex $v_G\in\widetilde{\mathcal{V}}$ and 
connecting two vertices $v_{G^{\prime}}$ and $v_{G^{\prime\prime}}$ by an edge $\e_{{G^{\prime}}{G^{\prime\prime}}}\in\widetilde{\mathcal{E}}$ if and
only if $\mathcal{V}_{S_{G^{\prime}}}\cap \mathcal{V}_{S_{G^{\prime\prime}}}\neq \emptyset$.

Under the assumptions that
\begin{itemize}
\item[(i)]
$(\widetilde{\mathcal{V}},\widetilde{\mathcal{E}})$ has a bounded vertex degree, i.e., $degree(v_G)\le c_1$ for all 
$v_G\in \widetilde{\mathcal{V}}$,
\end{itemize}
and
\begin{itemize}
\item[(ii)]the size of the graphs $K_{S_G}$ is uniformly bounded, i.e., $\vert \mathcal{V}_{S_G}\vert\le c_2 $ for all $K_G\in\mathcal{G}$, 
\end{itemize}
the coarse-level matrix $Q$ is sparse, i.e., the number of non-zero entries in every row of $Q$ is bounded by a constant $c_3$. 
\end{lemma}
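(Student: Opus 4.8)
The plan is to combine the zero-pattern characterization of Lemma~\ref{lem:sparsity} with the two boundedness hypotheses in order to produce a row-wise bound on the number of nonzeros of $Q$ that is uniform in the row index. The whole argument reduces to two counting questions, and the degree hypothesis (i) is the device that resolves the harder of the two.

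First I would fix an arbitrary coarse degree of freedom $d_i \in \mathcal{D}_c$ and ask which entries $q_{ij}$ in its row can be nonzero. By Lemma~\ref{lem:sparsity}, $q_{ij}\neq 0$ forces the existence of some $K_{S_G}\in\mathcal{S}$ with both $d_i,d_j\in\mathcal{V}_{S_G}$. Hence the support of row $i$ is contained in the union of the vertex sets $\mathcal{V}_{S_G}$ taken over all local Schur complement graphs $K_{S_G}$ whose vertex set contains $d_i$. The problem therefore splits into bounding (a) how many graphs $K_{S_G}$ contain the fixed vertex $d_i$, and (b) how large each such $\mathcal{V}_{S_G}$ is. Part (b) is immediate from hypothesis (ii), since $\vert\mathcal{V}_{S_G}\vert\le c_2$ for every $K_G\in\mathcal{G}$.

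The crux is part (a), and this is where I expect the main obstacle to lie: one must convert a bound on adjacency in the factor graph into a bound on the multiplicity with which a single DOF is covered. My claim is that all graphs containing $d_i$ lie in the closed neighborhood, in $(\widetilde{\mathcal{V}},\widetilde{\mathcal{E}})$, of any one of them. Indeed, assuming row $i$ is not identically zero (otherwise the bound is trivial), fix one graph $K_{S_{G_0}}$ with $d_i\in\mathcal{V}_{S_{G_0}}$. If $K_{S_{G'}}$ is any other graph with $d_i\in\mathcal{V}_{S_{G'}}$, then $\mathcal{V}_{S_{G'}}\cap\mathcal{V}_{S_{G_0}}\ni d_i$ is nonempty, so by the very definition of the factor graph the vertices $v_{G'}$ and $v_{G_0}$ are joined by an edge $\e_{G'G_0}\in\widetilde{\mathcal{E}}$. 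Thus every graph containing $d_i$ other than $G_0$ itself corresponds to a distinct neighbor of $v_{G_0}$, and hypothesis (i) caps the number of these neighbors at $\text{degree}(v_{G_0})\le c_1$. Consequently at most $c_1+1$ local Schur complement graphs contain $d_i$.

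Putting (a) and (b) together, the support of row $i$ is covered by at most $c_1+1$ vertex sets, each of cardinality at most $c_2$, so it contains at most $(c_1+1)c_2$ elements (a slightly sharper count excluding repeated membership of $d_i$ itself gives $(c_1+1)(c_2-1)+1$, but a constant bound is all that is needed). Since this estimate is independent of $i$, setting $c_3:=(c_1+1)c_2$ completes the proof. The only genuinely delicate point is the neighborhood argument in part (a): recognizing that co-membership of a single DOF is precisely the intersection condition defining an edge of $(\widetilde{\mathcal{V}},\widetilde{\mathcal{E}})$ is what lets the degree bound (i) control the covering multiplicity; the remainder is bookkeeping.
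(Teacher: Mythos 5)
Your proof is correct and follows essentially the same route as the paper: Lemma~\ref{lem:sparsity} confines the support of row $i$ to the union of the vertex sets $\mathcal{V}_{S_G}$ containing $d_i$, hypothesis (i) caps the covering multiplicity at $c_1+1$, and hypothesis (ii) caps each $\vert\mathcal{V}_{S_G}\vert$ at $c_2$. Your explicit neighborhood argument in part (a) merely spells out a step the paper asserts without elaboration, and your parenthetical sharper count recovers the paper's exact constant $c_3=(c_1+1)(c_2-1)+1$.
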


\begin{proof}
Due to property (i) for a given row $k$ the degree of freedom $d_k\in\mathcal{D}_c$ can belong to at most $(c_1+1)$ graphs $K_{S_G}$. Because of Lemma~\ref{lem:sparsity} a 
non-zero entry $q_{km}\neq 0$ can only occur if there exists $K_{S_G}$ such that both $d_k$ and $d_m$ belong to $\mathcal{V}_{S_G}$. However, since there are at most $(c_1+1)$ 
such possibilities and the size of the corresponding graphs is bounded by $c_2$, see assumption (ii), the total number of non-zero entries in row $k$ is bounded by $c_3:=(c_1+1)(c_2-1)+1$.     
\end{proof}

Discussion about the complexity of the ASMG algorithm based on ASCA can also be found in~\cite{K-16}.

\section{Numerical tests}\label{ne}

The convergence performance of the AMLI-cycle ASMG method based on ASCA is shown 
in $8$ examples from 
\begin{verbatim}
http://www.cise.ufl.edu/research/sparse/matrices/list_by_id.html,
\end{verbatim} 
see \cite{D-11}, also \cite{BA-13,D-89} and Figures~\ref{Fig15}--\ref{Fig22}. In Tables~\ref{tab:1}--\ref{tab:8} additionally to the number of iterations 
we report the number of CDOF for all levels. 

The numerical experiments have been run for:
\begin{itemize}
\item[(i)] V- and W-cycles; 
\item[(ii)] $2$ Gauss-Seidel smoothing steps;
\item[(iii)] a random start vector;
\item[(vi)] zero right hand side;
\item[(v)] stopping criterium: relative residual $10^{-8}$;
\item[(vi)] final level of coarsening: number of CDOF  $\le40$.
\end{itemize}

For Examples~1--4, 6--8 the components of the ASMG method based on ASCA are constructed in accordance with the procedure in Section~\ref{sec:prel_not} whereas in  
Example~5, where the graph is very sparse, we have chosen in Step~II of the procedure the set of nodes that defines a 
structure subgraph to consist of a focus node and all nodes that are at a graph distance $1$, $2$, $3$ or $4$ to it.

\subsection*{Example 1}


In this example the graph consists of $7\,928$ nodes and $59\,379$ edges. The minimum vertex degree is $1$, the maximum vertex degree is $33$, 
the average vertex degree is $\approx 14,98$.

\vspace{2ex}
\begin{tabular}{l|l}
Description & skirt, with coordinates. From NASA, collected by Alex Pothen \\
Author & NASA \\
 Editor & G. Kumfert A. Pothen
\end{tabular}
\vspace{2ex}

{
\begin{table}[h!]
 \begin{center}
 \renewcommand{\arraystretch}{1.4}
 \begin{tabular}{c |  c  c  c c   }
\hline
Levels &  2 & 3 & 4 & 5  \\  [1ex]
\hline
V-cycle & 17 & 17 & 18 & 18  \\  [1ex]
W-cycle & 17 & 17 & 17 & 17  \\ [1ex]
 \hline
CDOF & ~2422~ & ~~608~~ & ~~137~~ & ~~~40~~~  \\
\hline
\end{tabular}
\end{center}
\caption{Example 1, $7\,928$ DOF}\label{tab:1}
\end{table}
}

\subsection*{Example 2}


Here, there are 
$10\, 429$ nodes, $46\, 585$ edges and the minimum vertex degree is $3$, the maximum vertex degree is $27$ and 
the average vertex degree is $\approx 8,93$.

\vspace{2ex}
\begin{tabular}{l|l}
Description & SHUTTLE\_EDDY: Nasa matrix, but with diagonal added to original matrix \\
Author & NASA \\
 Editor & G. Kumfert A. Pothen
\end{tabular}
\vspace{2ex}

{
\begin{table}[h!]
\begin{center}
\renewcommand{\arraystretch}{1.4}
 \begin{tabular}{c |  c  c  c c  }
\hline
Levels &  2 & 3 & 4 & 5  \\  [1ex]
\hline
V-cycle & 10 & 11 & 12 & 13  \\  [1ex]
W-cycle & 10 & 10 & 10 & 10  \\ [1ex]
 \hline
CDOF & ~1411~ & ~~333~~ & ~~106~~ & ~~~37~~~ \\
 \hline
\end{tabular}
\end{center}
\caption{Example 2, $10\, 429$ DOF}
\label{tab:2}
\end{table}
}

\subsection*{Example 3}\nonumber


For the graph $K=(\mathcal{V},\mathcal{E})$ considered in this example we have $\vert \mathcal{V}\vert=16\,146$ and $\vert\mathcal{E}\vert=499\, 505$.
$89$ is the highest vertex degree whereas the minimum and the average are $24$ and $\approx 61,87$ respectively.

\vspace{2ex}
\begin{tabular}{l|l}
Description & STRUCTURE FROM NASA LANGLEY, ACCURACY PROBLEM ON Y-MP \\
Author & H. Simon \\
 Editor & H. Simon
\end{tabular}
\vspace{2ex}

{
\begin{table}[h!]
\begin{center}
\renewcommand{\arraystretch}{1.4}
 \begin{tabular}{c |  c  c  c c  }
\hline
Levels &  2 & 3 & 4 & 5   \\  [1ex]
\hline
V-cycle & 12 & 14 & 14 & 14  \\  [1ex]
W-cycle & 12 & 12 & 12 & 12  \\ [1ex]
\hline
CDOF & ~~643~~ & ~~143~~ & ~~~49~~~ & ~~~13~~~ \\
 \hline
\end{tabular}
\end{center}
\caption{Example 3, $16\,146$  DOF}
\label{tab:3}
\end{table}
}

\subsection*{Example 4}\nonumber


The graph presented in Example 4 has $67\,578$ nodes and $168\, 176$ edges.
Here the highest vertex degree is $53$, the minimum $1$ and the average $\approx 4,98$.

\vspace{2ex}
\begin{tabular}{l|l}
Description & DIMACS10 set: redistrict/ct2010 and ct2010a \\
Author & W. Zhao \\
 Editor & H. Meyerhenke
\end{tabular}
\vspace{2ex}

{
\begin{table}[h!]
\begin{center}
\renewcommand{\arraystretch}{1.4}
 \begin{tabular}{c |  c  c  c c c c  }
\hline
Levels &  2 & 3 & 4 & 5 & 6 & 7   \\  [1ex]
\hline
V-cycle & - & 13 & 14 & 16 & 17 & 17  \\  [1ex]
W-cycle & - & 10 & 10 & 10 & 10 & 10  \\ [1ex]
\hline
CDOF & ~22714~ & ~~5267~~ & ~~1497~~ & ~~~425~~~ & ~~~123~~~ & ~~~~39~~~~ \\
 \hline
\end{tabular}
\end{center}
\caption{Example 4, $67\,578$ DOF}
\label{tab:4}
\end{table}
}

\subsection*{Example 5}


In this Example the graph consists of $126\, 146$ nodes and  $161\,950$ edges. The maximum vertex degree is $7$, the minimum $1$ and 
the average $\approx 2,57$.

\vspace{2ex}
\begin{tabular}{l|l}
Description & Continental US road network (with $xy$ coordinates \\
Author & D. Gleich \\
 Editor & T. Davis
\end{tabular}
\vspace{2ex}

{
\begin{table}[h!]
\begin{center}
\renewcommand{\arraystretch}{1.4}
 \begin{tabular}{c |  c  c  c c  c c c c c}
\hline
Levels &  2 & 3 & 4 & 5 & 6 & 7 & 8 & 9 & 10 \\  [1ex]
\hline 
V-cycle & - & - & - & 14 & 16 & 18 & 19 & 20 & 20 \\  [1ex]
W-cycle & - & - & - & 7 & 7 & 7 & 7 & 7 & 7 \\ [1ex]
\hline 
CDOF & 57018 & 21671 & ~8703~ & ~3253~ & ~1201~ & ~~430~~ & ~~152~~ & ~~~51~~~ & ~~~15~~~\\
 \hline
\end{tabular}
\end{center}
\caption{Example 5, $126\, 146$ DOF}
\label{tab:5}
\end{table}
}

\subsection*{Example 6}


The graph in Example 6 has $143\, 437$ nodes,  $409\,593$ edges, maximum vertex degree $6$, minimum vertex degree $2$ and 
average vertex degree $\approx 5,71$.

\vspace{2ex}
\begin{tabular}{l|l}
Description & DIMACS10 set: walshaw/fe\_ocean \\
Author & F. Pellegrini \\
 Editor & C. Walshaw
\end{tabular}
\vspace{2ex}

{
\begin{table}[h!]
\begin{center}
\renewcommand{\arraystretch}{1.4}
 \begin{tabular}{c |  c  c  c c  c c}
\hline
Levels &  2 & 3 & 4 & 5 & 6 & 7 \\  [1ex]
\hline 
V-cycle & 12 & 13 & 15 & 15 & 15 & 15 \\  [1ex]
W-cycle & 12 & 13 & 13 & 13 & 13 & 13 \\ [1ex]
\hline 
CDOF & 71386 & 10026 & ~1927~ & ~~414~~ & ~~100~~ & ~~~31~~~\\
 \hline
\end{tabular}
\end{center}
\caption{Example 6: $143\, 437$ DOF}
\label{tab:6}
\end{table}
}

\subsection*{Example 7}


Example 7 considers a graph with $214\, 765$ nodes,  $1\,679\,018$ edges. The maximum, minimum and average vertex degrees are $40$, $4$ and 
$\approx 15,64$ respectively.

\vspace{2ex}
\begin{tabular}{l|l}
Description & DIMACS10 set: walshaw/m14b \\
Author & V. Kumar \\
 Editor & C. Walshaw
\end{tabular}
\vspace{2ex}

{
\begin{table}[h!]
\begin{center}
\renewcommand{\arraystretch}{1.4}
 \begin{tabular}{c |  c  c  c c  c c}
\hline
Levels &  2 & 3 & 4 & 5 & 6 & 7 \\  [1ex]
\hline 
V-cycle & - & 10 & 12 & 13 & 13 & 13 \\  [1ex]
W-cycle & - & 8 & 8 & 8 & 8 & 8 \\ [1ex]
\hline 
CDOF & 30238 & ~5634~ & ~1990~ & ~~288~~ & ~~69~~ & ~~~16~~~\\
 \hline
\end{tabular}
\end{center}
\caption{Example 7: $214\, 765$ DOF}
\label{tab:7}
\end{table}
}

\subsection*{Example 8}


The graph in Example 8 has $1\,048\, 572$ nodes and  $6\,891\,617$ edges. Its maximum vertex degree is $32$, its minimum $1$ and average $\approx 13,14$.

\vspace{2ex}
\begin{tabular}{l|l}
Description &  DIMACS10 set: random/rgg\_n\_2\_20\_s0 \\
Author & M. Holtgrewe P. Sanders C. Schulz \\
 Editor & C. Schulz
\end{tabular}
\vspace{2ex}

{
\begin{table}[h!]
\begin{center}
\renewcommand{\arraystretch}{1.4}
 \begin{tabular}{c |  c  c  c c  c c c c c}
\hline
Levels &  2 & 3 & 4 & 5 & 6 & 7 & 8 & 9 & 10 \\  [1ex]
\hline 
V-cycle & - & - & - & 16 & 18 & 19 & 19 & 19 & 19 \\  [1ex]
W-cycle & - & - & - & 11 & 11 & 11 & 11 & 11 & 11 \\ [1ex]
\hline 
CDOF & 160410 & ~48818~ & ~15966~ & ~~5249~~ & ~~1729~~ & ~~~586~~~ & ~~~208~~~ & ~~~~68~~~~ & ~~~~23~~~~\\
 \hline
\end{tabular}
\end{center}
\caption{Example 8: $1\,048\, 572$ DOF}
\label{tab:8}
\end{table}
}

\begin{remark}

We have implemented both the deflated CG method and the rank-$1$ update of the graph Laplacian matrix and as one would expect we have observed 
the same number of iterations in both cases. 
\end{remark}

\begin{remark}
In Tables~\ref{tab:5}--\ref{tab:8} we report the number of iterations only for the $\ell$-level methods with CDOF $\le 10\,000$ as solving exactly such big 
problems is time and memory consuming.
\end{remark}

\section{Conclusions}

The approach of the auxiliary space multigrid preconditioning based on ASCA as recently presented in~\cite{K-15} has been successfully extended to graph Laplacian matrices for general 
unstructured graphs. We have suggested a procedure and an algorithm to set up a hierarchy of coarse-grid operators that results in fast multigrid convergence. The 
operator complexity can be controlled by the size of the building components, see the discussion in Section~\ref{sec:complexity}.

We have numerically tested the proposed approach on several examples. 
As can be seen from the results in Tables~\ref{tab:1}--\ref{tab:8} the W-cycle converges uniformly in the number of levels, whereas
for Examples~\ref{tab:1}, \ref{tab:3}, \ref{tab:6}--\ref{tab:8} we observe also a uniformly 
convergent V-cycle. 

After all, it is still worth mentioning that the approach is 
purely algebraic and easy to be implemented from an algorithmic point of view. Furthermore, there are no limitations regarding the topology of the graphs. 

Future work will address weighted graph Laplacian systems. 

\bibliographystyle{unsrt}
\bibliography{thebib}

\end{document}